\documentclass[smallheadings,headsepline,11pt,a4paper]{article}

\usepackage[applemac]{inputenc} 
\usepackage{hyperref} 
\usepackage[english]{babel}
\usepackage{dsfont}
\usepackage{amsmath}
\usepackage{amssymb}
\usepackage{amsthm}
\pagestyle{headings} 
\clubpenalty = 10000 
\widowpenalty = 10000 
\title{Hyperclass Forcing in Morse-Kelley Class Theory}
\author{Carolin Antos, Sy-David Friedman\footnote{The first author wants to thank the Austrian Academy of Sciences for their generous support through their Doctoral Fellowship Program. Both authors are grateful to the John Templeton Foundation for its generous support through Grant ID 35216, which supported the preparation of this article.}\\
Kurt G\"odel Research Center for Mathematical Logic\\
University of Vienna}

\date{}
\begin{document}

\newtheorem{defi}{Definition} 
\newtheorem{theo}[defi]{Theorem}
\newtheorem{cor}[defi]{Corollary}
\newtheorem{prop}[defi]{Proposition}
\newtheorem{lem}[defi]{Lemma}
\newtheorem{rem}[defi]{Remark}
\newtheorem{claim}[defi]{Claim}

\maketitle

\begin{abstract}
In this article we introduce and study hyperclass-forcing  (where the conditions of the forcing notion are themselves classes) in the context of an extension of Morse-Kelley class theory, called MK$^{**}$.
We define this forcing by using a symmetry between MK$^{**}$ models and models  of ZFC$^-$ plus there exists a strongly inaccessible cardinal (called SetMK$^{**}$). We develop a coding between $\beta$-models $\mathcal{M}$ of MK$^{**}$ and transitive models $M^+$ of SetMK$^{**}$ which will allow us to go from $\mathcal{M}$ to $M^+$ and vice versa. So instead of forcing with a hyperclass in MK$^{**}$ we can force over the corresponding SetMK$^{**}$ model with a class of conditions. For class-forcing to work in the context of ZFC$^-$ we show that the SetMK$^{**}$ model $M^+$ can be forced to look like $L_{\kappa^*}[X]$, where $\kappa^*$ is the height of $M^+$, $\kappa$ strongly inaccessible in $M^+$ and $X\subseteq\kappa$. Over such a model we can apply definable class forcing and we arrive at an extension of $M^+$ from which we can go back to the corresponding $\beta$-model of MK$^{**}$, which will in turn be an extension of the original $\mathcal{M}$. 
Our main result combines hyperclass forcing with coding methods of \cite{beller1982coding} and \cite{Friedman:2000} to show that every $\beta$-model of MK$^{**}$ can be extended to a minimal such model of MK$^{**}$ with the same ordinals. A simpler version of the proof also provides a new and analogous minimality result for models of second-order arithmetic.

\end{abstract}

\section{Introduction}

When considering forcing notions with respect to their size, there are three different types: the original version of forcing, where the forcing notion is a set, called set forcing; forcing in ZFC, where the forcing notion is a class, called definable class forcing and  class forcing in Morse-Kelley class theory (MK). In this article we consider a fourth type which we call definable hyperclass forcing and give applications for this forcing in the context of Morse-Kelley class theory, where hyperclass forcing denotes a forcing with class conditions. We will define hyperclass forcing indirectly by using a correspondence between certain models of MK and models of a version of ZFC$^-$ (minus PowerSet) and show that we can define definable hyperclass forcing by going to the related ZFC$^-$ model and using definable class forcing there.

Two problems arise when considering definable class forcing in ZFC: the forcing relation might not be definable in the ground model and the extension might not preserve the axioms. As an example consider $Col(\omega, ORD)$ with conditions $p: n\to Ord$ for $n\in\omega$ which adds a cofinal sequence of length $\omega$ in the ordinals. Here Replacement fails\footnote{A detailed analyses on how even the Definability Lemma for class forcings can fail can be found in \cite{holy_class}.}. These problems were addressed in a general way by the second author in \cite{Friedman:2000}) where class forcings are presented which are definable (with parameters) over a model $\langle M, A\rangle$ where $M$ is a transitive model of ZFC, $A\subseteq M$ and Replacement holds in $M$ for formulas mentioning $A$ as a unary predicate. Two properties of the forcing notion are introduced, pretameness and tameness and it is shown that for a pretame forcing notion the Definability Lemma holds and Replacement is preserved and that tameness (which is a strengthening of pretameness) is equivalent to the preservation of the Power Set axiom. In this article we  will adjust this approach to definable class forcing in ZFC$^-$. Pretameness is defined as follows:

\begin{defi}
A forcing notion $P$ is pretame iff whenever $\langle D_i\vert i\in a\rangle$, $a\in M$, is an $\langle M, A\rangle$-definable sequence of dense classes and $p\in P$ then there is $q\leq p$ and $\langle d_i\vert i\in a\rangle\in M$ such that $d_i\subseteq D_i$ and $d_i$ is predense $\leq q$ for each $i$.
\end{defi}

For definable hyperclass forcing we will work in the context of Morse-Kelley class theory, by which we mean a theory with a two-sorted language, i.e. the object are sets and classes and we have corresponding quantifiers for each type of object. We denote the classes by upper case letters and sets by lower case letters, the same will hold for class-names and set-names and so on. Hence atomic formulas for the $\in$-relation are of the form ``$x\in X$'' where $x$ is a set-variable and $X$ is a set- or class-variable. The models $\mathcal{M}$ of MK  are of the form $\langle M, \in, \mathcal{C}\rangle$, where $M$ is a transitive model of ZFC, $\mathcal{C}$ the family of classes of $\mathcal{M}$ (i.e. every element of $\mathcal{C}$ is a subset of $M$) and $\in$ is the standard $\in$ relation (from now on we will omit mentioning this relation). We use the following axiomatization of MK:

\begin{itemize}
\item[A)] Set Axioms:
\begin{enumerate}
\item Extensionality for sets: $\forall x \forall y (\forall z\,(z\in x\leftrightarrow z\in y)\to x=y)$.
\item Pairing: For any sets $x$ and $y$ there is a set $\{x,y\}$.
\item Infinity: There is an infinite set.
\item Union: For every set $x$ the set $\bigcup x$ exists.
\item Power set: For every set $x$ the power set $P(x)$ of $x$ exists.
\end{enumerate}
\item[B)] Class Axioms:
\begin{enumerate}
\item Foundation: Every nonempty class has an $\in$-minimal element.  
\item Extensionality for classes: $\forall z\,(z\in X\leftrightarrow z\in Y)\to X=Y$.
\item Replacement: If a class $F$ is a function and $x$ is a set, then $\{ F(z): z\in x\}$ is a set.
\item Class-Comprehension: \begin{equation*}
\forall X_1\ldots\forall X_n\exists Y\; Y=\{x: \varphi(x,X_1,\ldots, X_n)\}
\end{equation*}
where $\varphi$ is a formula containing class parameters in which quantification over both sets and classes are allowed.
\item Global Choice: There exists a global class well-ordering of the universe of sets.
\end{enumerate}
\end{itemize}

Class forcing in MK was defined by the first author in \cite{antosclassforcing}. Here the Definability Lemma holds for unrestricted forcing notions, but for the preservation of the axioms we still need pretameness and tameness.

The structure of this article will be as follows: First, we define the correspondence between certain models of a version of MK and ZFC$^-$ and show that this correspondence is indeed a coding between a variant of MK and certain models of the ZFC$^-$ which allows us to go back and forth between them. Then we define definable hyperclass forcing and show how the problems of definable class forcing in the setting of ZFC$^-$ can be handled. We conclude the chapter by giving an example of definable hyperclass forcing by showing that every $\beta$-model of a variant of MK can be extended to a minimal $\beta$-model of the same variant of MK with the same ordinals.

\section{Coding between MK$^*$ and SetMK$^*$}\label{codingsection}

In the context of ZFC we can talk about definable class forcings as done in \cite{Friedman:2000}, where we deal directly with the class forcing notion as it is definable from a class predicate. Here we want to develop a way of defining definable hyperclass forcings in MK, i.e. forcings with class conditions, but we will choose an indirect approach, which will allow us to reduce the technical problems as much as possible to the context of definable class forcing. So instead of talking directly about hyperclasses, we will use a correspondence between models of a variant of MK (called MK$^*$) and models of a variant of ZFC$^-$ (called SetMK$^*$). We get an idea of how such a model of SetMK$^*$ looks by considering the following model of MK: $\langle V_{\kappa}, V_{\kappa+1}\rangle$ where $\kappa$ is strongly inaccessible. Similar to this model we will show how to define a model of SetMK$^*$ with a strongly inaccessible cardinal $\kappa$ which is the largest cardinal such that the sets of the MK$^*$ model are elements of $V_{\kappa}$ and the classes are elements of $V_{\kappa^*}$, where $\kappa^*$ is the height of the SetMK$^*$ model. We will then force over such a model with a definable class forcing which will give us an extension of the SetMK$^*$ model. From this extension we can then go back to a model of MK$^*$ and this is the definable hyperclass-generic extensions of the original MK$^*$ model.

\begin{picture}(70,60)
\put(90,7){\vector(1,0){55}}
\put(75,15){\vector(0,1){20}}
\put(155,35){\vector(0,-1){20}}
\put(90,43){\vector(1,0){55}}
\put(70,0){$\mathcal{M}$}
\put(70,40){$M^+$}
\put(150,40){$M^+[G]$}
\put(150,0){$\mathcal{M}[G]$}
\put(10,0){In MK$^*$:}
\put(10,40){In SetMK$^*$:}
\put(93,46){$\scriptstyle{def.\, class}$}
\put(93,36){$\scriptstyle{forcing}$}
\put(93,10){$\scriptstyle{def.\, hyperclass}$}
\put(93,0){$\scriptstyle{forcing}$}
\end{picture}\\

 In the following we will describe how we can go from MK$^*$ to SetMK$^*$ and vice versa and show that the basic properties of class forcing over a model of SetMK$^*$ hold. Then we give an application of definable hyperclass forcing regarding minimal models of MK$^{**}$. 

But before we develop the relation between these models further we will impose a restriction on the models we are considering. 
\begin{defi}
A model $\mathcal{M}$ of Morse-Kelley class theory is a $\beta$-model of MK if a class is well-founded in $\mathcal{M}$ if and only if it is true that the class is well-founded. 
\end{defi}
We introduce this restriction for two reasons: First, we will define a coding which allows us to go from a $\beta$-model of MK$^*$ to a transitive model of SetMK$^*$ and this coding only works in the intended way if we know that every well-founded class in the model is really well-founded (see section \ref{codingsection}). Secondly we will prove a theorem about minimal models and such a notion only makes sense if we work with minimal $\beta$-models. So from now on, we will always talk about $\beta$-models of (variants of) MK.

The associated model of set theory will be a model of ZFC$^-$ (i.e. minus the Power Set Axiom) where we understand such a model to include the Collection (or Bounding) Principle\footnote{Note that in ZFC minus Power Set the Bounding Principle does not follow from Replacement. This is used in \cite{Zarach:1982}, where he showed that in ZF$^-$ the different formulations of the Axiom of Choice are not equivalent. As for MK, work done in \cite{hamgit} shows that for example ultrapower constructions don't work without first adding a version of Class-Bounding.}. To ensure this we have to add the Class-Bounding Principle, a ``class version'' of the Bounding Principle, and we call the resulting axiomatic system MK$^*$:
\begin{defi}
The axioms of MK$^*$ consist of the axioms of MK plus the Class-Bounding Axiom
\begin{equation*}
\forall x\,\exists A\, \varphi(x,A)\to\exists B\,\forall x\,\exists y\, \varphi(x,(B)_{y})
\end{equation*}
where $(B)_{y}=\{z\,\vert\, (y,z)\in B\}$.\\
\end{defi}  

Note that as we have Global Choice, this is equivalent to AC$_{\infty}$: 
\begin{equation*}
\forall x\,\exists A\, \varphi(x,A)\to\exists B\,\forall x\, \varphi(x,(B)_{x}).
\end{equation*}

Equivalently, SetMK$^*$ will include the set version of Bounding (here called Set-Bounding):
\begin{equation*}
\forall x\in a \,\exists y \,\varphi(x,y)\to \exists b\,\forall x\in a\,\exists y\in b\,\varphi(x,y)
\end{equation*}
As we will show in the proof of Theorem \ref{maintheorem} and the proof of Theorem \ref{conversemain}, Set-Bounding in SetMK$^*$ follows from Class-Bounding in MK$^*$ and vice versa.

We are now going to show how to translate the theory of MK$^*$ to a first-order set theory SetMK$^*$. The axioms of SetMK$^*$ are:
\begin{enumerate}
\item ZFC$^-$ (including Set-Bounding).  
\item There is a strongly inaccessible cardinal $\kappa$.
\item Every set can be mapped injectively into $\kappa$.
\end{enumerate}
We can construct a transitive model $M^+$ of SetMK$^*$ out of any $\beta$-model $(M,\mathcal{C})$ of MK$^*$ by taking all sets which are coded by a pair $(M_0, R)$, where $M_0$ belongs to $\mathcal{C}$ and $R$ is a binary relation within $\mathcal{C}$. We will show that $M^+$ is the unique model of SetMK$^*$ with largest cardinal $\kappa$ such that $M=V_{\kappa}^{M^+}$ and the elements of $\mathcal{C}$ are the subsets of $M$ in $M^+$. 

To describe the coding between SetMK$^*$ and MK$^*$ we will define what a coding pair $(M_0, R)$ is and what it means for a coding pair $(M_0, R)$ to code a set $x$ in a model of SetMK$^*$.

\begin{defi}\label{codingdefi}
A pair $(M_0,R)$ is a coding pair in the $\beta$-model $\mathcal{M}=(M, \mathcal{C})$ if $M_0$ is an element of $\mathcal{C}$ with a distinguished element $a$,
$R\in\mathcal{C}$ and $R$ is a binary relation on $M_0$ with the following properties:
\begin{itemize}
\item[a)] $\forall z\in M_0\,\exists ! n$ such that $z$ has $R$-distance $n$ from $a$, i.e. there is an $R$-chain $(z R z_{n-1} R \ldots R z_{1} R a)$,
\item[b)] if $x, y, z\in M_0$ with $y\neq z$, $y R x$, $z R x$ then $(M_0, R)\restriction y$ is not isomorphic to $(M_0, R)\restriction z$, where $(M_0, R)\restriction y$ denotes the $R$-transitive closure below $y$ (i.e. $y$ together with all elements which are connected to $y$ via an $R$-chain), respectively for $z$,
\item[c)] if $y,z\in M_{0}$  are on level $n$ (i.e. have the same $R$-distance $n$ from $a$) and $y\neq z$ then $v R y \,\to\, \neg(v R z)$,
\item[d)] $R$ is well-founded.
\end{itemize}
\end{defi}

Note that in the definition of the codes in $(M,\mathcal{C})$ we need the assumption that $(M,\mathcal{C})$ is a $\beta$-model as for a class to code a set in $M^+$ it has to be well-founded not only in the MK model but ``in the real world''.

The meaning of the definition becomes clearer when we view the coding pair as a tree $T$ whose nodes are exactly the distinct elements of $M_0$, the top node is $a$ and $R$ is the extension relation of the tree. A tree $T'$ with top node $a'$ is a subtree of $T$ if $a'$ is a node of $T$ and $T'$ contains all $T$-nodes (not only immediately) below $a'$.  If $T'$ is a subtree of $T$ such that $a'$ lies directly below $a$ then $T'$ is called a direct subtree of $T$. Then property $b)$ states that for every node $x$ distinct direct subtrees are not isomorphic and property $c)$ implies that the trees below two distinct points on the same level are disjoint (and not only on the next level).

The idea behind the coding pairs is, that every coding pair will define a unique set $x$ in the SetMK$^*$ model. Note that at the same time every $x$ in $M^+$ can correspond to different coding pairs in $\mathcal{M}$. 

In the following we will give some intuition on what such a correspondence between coding pairs in $\mathcal{M}$ and sets in $M^+$ should look like: Every $x\in M^+$ is coded by a tree $T_x$ where $x$ is associated to the top node $a_x$ of $T_x$, the elements $y\in x$ are associated to the nodes on the first level below $a_x$ so that every node on this level gives rise to a subtree $T_y$ which codes $y$ so that the elements of $y$ are associated to the nodes on the second level below $a_x$ and so on:\\
\\
\\
\\

\setlength{\unitlength}{1cm} 
\begin{picture}(1,1)
\put(2.9,1.4){$T_x$}

\put(3,1){\line(0,-1){1}}
\put(3,1){\line(-1,-1){1}}
\put(3,1){\line(1,-1){1}}
\put(3,1){\line(-1,-2){0.5}}
\put(3,1){\line(1,-2){0.5}}
\put(3,0){\line(-1,-2){0.5}}
\put(3,0){\line(1,-2){0.5}}
\put(3,0){\line(0,-1){1}}
\put(3.02,1){$a_{x}$}
\put(3.02,0){$a_y$}
\put(3.02,-1){$a_z$}
\put(3.35,-0.5){$T_y$}

\put(5,0.5){codes}
\put(7.3,1){$x$}
\put(7.3,0.5){$\in$}
\put(7.3,0){$y$}
\put(7.3,-0.5){$\in$}
\put(7.3,-1){$z$}

\end{picture}\\
\\
\\
\\
Note that there are only countably many levels but a level can have class many elements. If two elements $a_y, a_z$ have the same $R_x$ predecessor (i.e. are connected to the same node on the previous level) their subtrees $T_y, T_z$ will never be isomorphic and therefore don't code the same element of $M^+$ (by property $b)$ of Definition \ref{codingdefi}). But it can happen that there are isomorphic subtrees on different levels or on the same level but not connected to the same node on the level above. This can be made clear in the following two examples: First let $y\in x$, $v\in y$ and $w\in y$ and $v\in w$. Then there are two isomorphic trees $T_v$ and $T'_v$ both coding $v$ but on different levels:\\
\\
\\

\setlength{\unitlength}{1cm} 
\begin{picture}(1,1)
\put(2.9,1.4){$T_x$}

\put(3,1){\line(0,-1){1}}
\put(3,1){\line(-1,-1){1}}
\put(3,1){\line(1,-1){1}}
\put(3,1){\line(-1,-2){0.5}}
\put(3,1){\line(1,-2){0.5}}
\put(3,0){\line(-1,-1){1}}
\put(3,0){\line(1,-1){1}}
\put(3,0){\line(0,-1){1}}
\put(3,0){\line(-1,-2){0.5}}
\put(3,0){\line(1,-2){0.5}}
\put(2,-1){\line(-1,-2){0.5}}
\put(2,-1){\line(1,-2){0.5}}
\put(4,-1){\line(-1,-2){0.5}}
\put(4,-1){\line(1,-2){0.5}}
\put(4.5,-2){\line(-1,-2){0.5}}
\put(4.5,-2){\line(1,-2){0.5}}
\put(3.02,1){$a_{x}$}
\put(3.02,0){$a_y$}
\put(1.6,-1){$a_v$}
\put(4.02,-1){$a_w$}
\put(4.52,-2){$a'_v$}
\put(1.85,-2){$T_v$}
\put(4.35,-3){$T'_v$}

\put(5,0.5){codes}
\put(7.3,1){$x$}
\put(7.3,0.5){$\in$}
\put(7.3,0){$y$}
\put(7.3,-0.5){$\in$}
\put(7,-1){$v$}
\put(7.5,-1){$w$}
\put(7.5,-1.5){$\in$}
\put(7.5,-2){$v$}

\end{picture}\\
\\
\\
\\
\\
\\
\\

Secondly let $v\in w$, $v\in y$ and $w,y\in x$. Again there are two isomorphic trees $T_v$ and $T'_v$ coding $v$ but this time on the same level:\\
\\
\\

\setlength{\unitlength}{1cm} 
\begin{picture}(1,1)
\put(2.9,1.4){$T_x$}

\put(3,1){\line(0,-1){1}}
\put(3,1){\line(-1,-1){1}}
\put(3,1){\line(1,-1){1}}
\put(3,1){\line(-1,-2){0.5}}
\put(3,1){\line(1,-2){0.5}}

\put(3.5,0){\line(-1,-2){0.5}}
\put(3.5,0){\line(1,-2){0.5}}
\put(2,0){\line(-1,-2){0.5}}
\put(2,0){\line(1,-2){0.5}}
\put(1.5,-1){\line(-1,-2){0.5}}
\put(1.5,-1){\line(1,-2){0.5}}
\put(4,-1){\line(-1,-2){0.5}}
\put(4,-1){\line(1,-2){0.5}}
\put(3.02,1){$a_{x}$}
\put(3.52,0){$a_y$}
\put(1.6,-1){$a_v$}
\put(1.6,0){$a_w$}
\put(4.02,-1){$a'_v$}
\put(1.35,-2){$T_v$}
\put(3.85,-2){$T'_v$}

\put(5,0.5){codes}
\put(7.3,1){$x$}
\put(7.3,0.5){$\in$}
\put(7.1,0){$w,y$}
\put(7.3,-0.5){$\in$}
\put(7.3,-1){$v$}

\end{picture}\\
\\
\\
\\
\\

To show that Definition \ref{codingdefi} indeed defines a coding, we have to show that there is a correspondence between $x$ and its coding pair. As we want to include non-transitive sets we will work with $(TC(\{x\}),\in)$ (note that we used the transitive closure of $\{x\}$ rather than the transitive closure of $x$ as the transitive closure of two different sets could be the same). As we have seen, the coding tree will have a lot of isomorphic subtrees, for example many different pairs $(a_i,\{\})$ coding the empty set. So the tree $T_x$ itself will not be isomorphic to $(TC(\{x\}),\in)$ and we will have to collapse $(M_x, R_x)$ to a structure $(M_x, R_x)/\approx$ in which we have identified all these isomorphic subtrees. We define this quotient of the coding pair in the following way: 
\begin{defi}\label{quotientstructure}
For a coding pair $(M_0, R)$, let $[a]=\{ b\in M_0\,\vert\,(M_0, R)\restriction b$ isomorphic to $(M_0, R)\restriction a\}$ be the equivalence class of all the top nodes of subtrees of the coding tree $T$ which are isomorphic to the subtree $T_a$ (here $(M_0, R)\restriction b$ denotes the ``sub-coding pair'' which is the subtree $T_b$ as detailed in Definition \ref{codingdefi}). By Global Choice let $\tilde{a}$ be a fixed representative of this class. Then let $\tilde{M}_0=\{\tilde{a}\,\vert\, [a]\text{ for all } a\in M_0\}$ and define the relation $\tilde{R}$ as follows: $\tilde{a}\tilde{R}\tilde{b}$ iff $\exists a_0, b_0$ such that $a_0\in[a]$ and $b_0\in[b]$ and $a_0 R b_0$.
\end{defi}
Note that if $a_0\approx a_1$ and $b_0 R a_0$ then there is $b_1$ with $b_1 R a_1$ such that $b_0\approx b_1$ as the isomorphism between $T_{a_0}$ and $T_{a_1}$ will restrict to the trees $T_{b_0}$ and $T_{b_1}$. 

The following example shows how this quotient structure looks for a possible coding tree of the set $3$:

\setlength{\unitlength}{1cm} 
\begin{picture}(1,1)

\put(3,0){\line(-1,-1){1}}
\put(3,0){\line(1,-1){1}}
\put(3,0){\line(0,-1){1}}
\put(3,-1){\line(0,-1){1}}
\put(4,-1){\line(-1,-2){0.5}}
\put(4,-1){\line(1,-2){0.5}}
\put(4.5,-2){\line(0,-1){1}}

\put(2.9,0.15){$3$}
\put(2.95,-0.03){$\centerdot$}
\put(1.75,-1){$0$}
\put(1.95,-1.05){$\centerdot$}
\put(2.7,-1){$1$}
\put(2.95,-1.05){$\centerdot$}
\put(2.65,-2){$0'$}
\put(2.95,-2.05){$\centerdot$}
\put(4.1,-1){$2$}
\put(3.95,-1.05){$\centerdot$}
\put(4.6,-2){$1'$}
\put(4.45,-2.05){$\centerdot$}
\put(3.2,-2){$0''$}
\put(3.45,-2.05){$\centerdot$}
\put(4,-3){$0'''$}
\put(4.45,-3){$\centerdot$}

\put(6,-1){$\to$}
\put(9,0){\line(-1,-1){1}}
\put(9,0){\line(1,-3){1}}
\put(9,0){\line(0,-1){2}}
\put(8,-1){\line(1, -1){1}}
\put(9,-2){\line(1,-1){1}}
\put(8,-1){\line(1, -2){0.5}}
\put(8.5,-2){\line(1, -1){0.5}}
\put(9,-2.5){\line(2, -1){1}}

\put(8.9,0.15){$\tilde{3}$}
\put(8.95,-0.04){$\centerdot$}
\put(7.75,-1){$\tilde{2}$}
\put(7.95,-1.05){$\centerdot$}
\put(9.1,-2){$\tilde{1}$}
\put(8.95,-2.05){$\centerdot$}
\put(10.08,-3){$\tilde{0}$}
\put(9.94,-3.02){$\centerdot$}
\

\end{picture}\\
\\
\\
\\
\\
\\
\\
\\
As one can see, the resulting structure $(\tilde{M}_3, \tilde{R}_3)$ is then isomorphic to $(TC(\{3\}),\in)$. In the following we will show that this construction works in general:

\begin{lem}\label{codingx}
Let $(M_0, R)$ be a coding pair. Then the quotient structure $(\tilde{M}_0, \tilde{R})$ as defined in Definition \ref{quotientstructure} is extensional and well-founded.
\end{lem}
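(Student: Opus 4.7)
The plan is to prove well-foundedness and extensionality separately, both relying on the observation stated just after Definition \ref{quotientstructure}: if $a_0 \approx a_1$ and $b_0 R a_0$, then there is $b_1$ with $b_1 R a_1$ and $b_0 \approx b_1$. Call this the \emph{lifting property}; it is what converts $\tilde{R}$-data on representatives into genuine $R$-data in $(M_0,R)$.

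For well-foundedness I would argue by contradiction. Suppose $(\tilde{a}_n)_{n<\omega}$ is a sequence with $\tilde{a}_{n+1}\tilde{R}\tilde{a}_n$ for all $n$. I recursively build $b_n \in M_0$ with $b_n \in [\tilde{a}_n]$ and $b_{n+1} R b_n$: set $b_0=\tilde{a}_0$; given $b_n \approx \tilde{a}_n$, unpack $\tilde{a}_{n+1}\tilde{R}\tilde{a}_n$ to obtain some $c_n R d_n$ with $c_n\in[\tilde{a}_{n+1}]$, $d_n\in[\tilde{a}_n]$, and then apply the lifting property (with $a_0=d_n$, $a_1=b_n$, $b_0=c_n$) to extract $b_{n+1}\in[\tilde{a}_{n+1}]$ with $b_{n+1}Rb_n$. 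The resulting sequence $(b_n)_{n<\omega}$ is an infinite $R$-descending chain in $M_0$, contradicting property (d) of coding pairs together with the $\beta$-model assumption on $\mathcal{M}$, which forces well-founded classes of $\mathcal{M}$ to be really well-founded.

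For extensionality, suppose $\tilde{a},\tilde{b}\in\tilde{M}_0$ have the same $\tilde{R}$-predecessors in $\tilde{M}_0$; I aim to show $\tilde{a}\approx\tilde{b}$, which forces $\tilde{a}=\tilde{b}$ by uniqueness of the chosen representatives. Set $P_{\tilde{a}}=\{c\in M_0 : cR\tilde{a}\}$ and similarly $P_{\tilde{b}}$. Using the definition of $\tilde{R}$ together with the lifting property, the hypothesis translates into the existence of a map $f:P_{\tilde{a}}\to P_{\tilde{b}}$ with $c\approx f(c)$ for every $c\in P_{\tilde{a}}$; property (b), which says distinct immediate $R$-predecessors of a common node have non-isomorphic subtrees, ensures $f$ is well-defined and, by the symmetric construction for $\tilde{b}$, a bijection. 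By Global Choice I pick isomorphisms $\phi_c:T_c\to T_{f(c)}$ for each $c\in P_{\tilde{a}}$. Properties (a) and (c) together imply that every node has a unique $R$-path to the root, so the subtrees $\{T_c:c\in P_{\tilde{a}}\}$ partition $T_{\tilde{a}}\setminus\{\tilde{a}\}$ into pairwise disjoint pieces, and similarly on the $\tilde{b}$ side. Hence the $\phi_c$ glue coherently, and extending by $\tilde{a}\mapsto\tilde{b}$ yields the desired isomorphism $T_{\tilde{a}}\cong T_{\tilde{b}}$, which is exactly $\tilde{a}\approx\tilde{b}$.

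The main obstacle I expect is the well-foundedness step: extracting a real descending $R$-chain from a descending $\tilde{R}$-chain requires both careful bookkeeping to stay inside the correct $\approx$-classes and an $\omega$-step dependent-choice recursion in the metatheory, and the whole argument collapses without the $\beta$-model hypothesis, since $R$ could be well-founded inside $\mathcal{M}$ while admitting descending chains in $V$.
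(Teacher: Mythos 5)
Your proposal is correct and follows essentially the same route as the paper's proof: both parts rest on the lifting observation stated after Definition \ref{quotientstructure}, well-foundedness is transferred from $R$ to $\tilde{R}$ (you exclude descending $\omega$-chains where the paper exhibits $\tilde{R}$-minimal elements, an equivalent phrasing given the $\beta$-model hypothesis), and extensionality is deduced from property (b) exactly as in the paper, with your gluing of the isomorphisms $\phi_c$ merely spelling out the step the paper compresses into ``it follows that $[y]=[z]$.'' The only point worth flagging is that to get the glued isomorphism as a single class of $\mathcal{C}$ one should choose the $\phi_c$ uniformly via Class-Bounding and Global Choice, which is the same device the paper uses in Coding Lemma \ref{codinglem1}.
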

\begin{proof}
By Class-Comprehension $\tilde{R}\in\mathcal{C}$ and $\tilde{R}$ is well-founded as we can always find an $R$-minimal element $a$, build the equivalence class $[a]$ and find its representative $\tilde{a}$. Then $\tilde{a}$ is $\tilde{R}$ minimal as otherwise there exists $\tilde{a}'$ such that $\tilde{a}' \tilde{R} \tilde{a}$ and therefore there is $a'_0\in[a']$ such that $a'_0 R a$. 

To show that $\tilde{R}$ is extensional, let $\tilde{y}, \tilde{z}\in\tilde{M}_0$ with $\tilde{y}\neq\tilde{z}$ and assume that they have the same extension $\{\tilde{x}\,\vert\,\tilde{x}\tilde{R}\tilde{y}\}=\{\tilde{x}\,\vert\,\tilde{x}\tilde{R}\tilde{z}\}$. Going back to $(M_0, R)$ this means that the elements of the related equivalence classes $[y]$, $[z]$ have the same isomorphism types of children, i.e. for every $x_0, y_0, z_0\in M_0$ with $x_0 R y_0$, $y_0\in[y]$ and $z_0\in[z]$ we can find $x_1$ with $x_1 R z_0$ such that $x_0, x_1\in[x]$. By using property $b)$ of Definition \ref{codingdefi} it follows that the $[y]=[z]$, because we do not have multiplicities in $(M_0, R)$, i.e. isomorphic subtrees that are connected to the same $R$-predecessor. It follows that $\tilde{y}=\tilde{z}$.
\end{proof}

Note that the quotient structure always has a fixed top node which is the representative of the equivalence class of the distinguished node of $(M_0, R)$, which has the distinguished node as its only element.

It follows from Mostowski's Theorem that there is a unique transitive structure with the $\in$-relation that is isomorphic to $(\tilde{M}_0, \tilde{R})$. This structure then has the form $(TC(\{x\}),\in)$ for a unique set $x$. 

\begin{defi}
A coding pair $(M_x, R_x)$ is called a coding pair for $x$, if $x$ is the unique set such that $(\tilde{M}_x, \tilde{R}_x)$ is isomorphic to $(TC(\{x\}),\in)$. 
\end{defi}

In the following we will use this coding to associate a transitive model of SetMK$^*$ to each $\beta$-model of MK$^*$ and vice versa.

\begin{theo}\label{maintheorem}
Let $\mathcal{M}=(M,\mathcal{C})$ be a $\beta$-model of MK$^*$ and
\begin{align*}
M^+=\{x\,\vert\,\text{there is a coding pair }(M_x,R_{x})\text{ for }x\}
\end{align*}
Then $M^+$ is the unique, transitive set that obeys the following properties:
\begin{itemize}
\item[a)] $M^+\models\text{ SetMK}^*$,
\item[b)] $\mathcal{C}=P(M)\cap M^+$,
\item[c)] $M=V_{\kappa}^{M^+}$, $\kappa$ is the largest cardinal in M$^+$ and strongly inaccessible in $M^+$.
\end{itemize}
\end{theo}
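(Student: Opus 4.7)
The approach is to construct coding pairs for every element of $M$ and every class in $\mathcal{C}$, use the decoding given by Lemma~\ref{codingx} and Mostowski's theorem to pin down $M^+$, identify $M$ with $V_\kappa^{M^+}$ where $\kappa=\mathrm{Ord}^M$, and only then verify the SetMK$^*$ axioms inside $M^+$.

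First I would establish that $M^+$ is transitive and contains $M\cup\mathcal{C}$. Transitivity follows because, if $y\in x\in M^+$ and $b$ is the level-$1$ node of $(M_x,R_x)$ coding $y$, then $(M_x,R_x)\restriction b$ is a coding pair for $y$, definable by Class-Comprehension. For $x\in M$, unfolding $(TC(\{x\}),\in)$ into the tree of finite $\in$-descents $(x\ni y_1\ni\cdots\ni y_n)$ yields a set-sized coding pair. For $C\in\mathcal{C}$, gluing the coding pairs of the elements of $C$ under a fresh top node, assembled into a single class by Class-Comprehension, produces a coding pair in $\mathcal{C}$. The coding preserves rank, so $M\subseteq V_\kappa^{M^+}$, and conversely any $y\in M^+$ of $M^+$-rank below $\kappa$ has a coding pair of $M$-set size, which lies in $M$ and decodes (by rank-induction) to an element of $M$. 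Hence $M=V_\kappa^{M^+}$, and since $M\models\mathrm{ZFC}$, $\kappa$ is strongly inaccessible in $M^+$.

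For $\mathcal{C}=P(M)\cap M^+$, the forward inclusion is immediate from the previous step. For the reverse, if $x\in M^+$ with $x\subseteq M$, then $(M_x,R_x)\in\mathcal{C}$ and the collection of $y\in M$ such that some level-$1$ node $b$ of the coding tree satisfies $(\tilde M_x,\tilde R_x)\restriction b\cong(TC(\{y\}),\in)$ equals $x$ and is a class in $\mathcal{C}$ by Class-Comprehension. For axiom~(3) of SetMK$^*$, every $x\in M^+$ injects into $\kappa$ via the level-$1$ nodes of its coding tree, which embed into $M_x\subseteq M$ with $|M|^{M^+}=\kappa$; this simultaneously makes $\kappa$ the largest cardinal of $M^+$. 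Global Choice in $\mathcal{M}$ yields AC in $M^+$, and Extensionality, Pairing, Union, Infinity, and Foundation are routine by uniformly building coding pairs for the required sets from those of the ingredients.

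The step I expect to be the main obstacle is verifying the remaining ZFC$^-$ axioms in $M^+$, namely Separation, Replacement, and Set-Bounding. The strategy is to translate any formula $\varphi(\bar x,\bar p)$ over $M^+$ into an MK$^*$-formula $\varphi^*$ over $\mathcal{M}$ by replacing each set variable by a coding-pair variable, using that the coding machinery of Definitions~\ref{codingdefi} and \ref{quotientstructure} is first-order definable in MK$^*$. Separation then drops out of Class-Comprehension applied to $\varphi^*$, while Replacement and Set-Bounding follow by applying Class-Bounding in its AC$_\infty$ form to the resulting family of coding pairs witnessing the existence statements. The delicate point is to verify that this translation commutes with connectives and quantifiers and that the sequence of classes to which Class-Bounding is applied is itself definable from the available parameters. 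Uniqueness is then straightforward: any transitive $N$ satisfying (a)--(c) has $V_\kappa^N=M$ and $P(M)\cap N=\mathcal{C}$, so axiom~(3) applied inside $N$ codes every element of $N$ hereditarily from $\mathcal{C}$, and running the same coding-analysis inside $N$ reproduces $M^+$.
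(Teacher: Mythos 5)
Your overall architecture matches the paper's: build coding pairs for the elements of $M$ and the classes in $\mathcal{C}$, translate first-order formulas over $M^+$ into MK$^*$-formulas about codes, then read off Separation from Class-Comprehension and Bounding/Replacement from Class-Bounding, with the injection of each $x\in M^+$ into $\kappa$ via the level-$1$ nodes of its tree (the paper's Coding Lemma~2) giving axiom~(3) and the largest-cardinal claim. However, there is a genuine gap in the step you yourself flag as the main obstacle. You locate the delicacy of the translation in its commuting with connectives and quantifiers, but the real difficulty is in the \emph{atomic} cases. The natural translation of ``$y\in x$'' is ``$c_y$ is isomorphic to a direct subtree of $c_x$,'' and of ``$y=x$'' is ``$c_y$ is isomorphic to $c_x$''; for the biconditional ``$M^+\models\varphi$ iff $\mathcal{M}\models\psi$ for \emph{any} choice of codes'' to hold, these isomorphisms must be witnessed by classes lying in $\mathcal{C}$, not merely exist externally. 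For set-sized codes this is harmless, but a single element of $M^+$ can be coded by two non-isomorphic-in-any-obvious-way proper-class trees, and nothing in your proposal shows $\mathcal{M}$ can see the isomorphism. The paper isolates this as Coding Lemma~1 and proves it by a nontrivial argument: take a tree-minimal node below which the isomorphism fails to be in $\mathcal{C}$, use Class-Bounding to collect into one class $B$ isomorphisms $(B)_c$ for all pairs of isomorphic direct subtrees below that node, and use Global Choice to splice them into an isomorphism in $\mathcal{C}$, contradicting minimality. Without this lemma your translation is not well-defined (it could depend on the choice of codes) and its atomic soundness fails, so everything downstream --- Separation, Bounding, and hence Replacement --- is unsupported.

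A second, smaller omission: your claim that Pairing, Union, etc.\ are ``routine by uniformly building coding pairs from those of the ingredients'' overlooks that naively gluing subtrees under a fresh top node can violate clause~b) of Definition~\ref{codingdefi} (distinct direct subtrees must be non-isomorphic). For Union, the grandchildren subtrees of $T_x$ attached to different children may be isomorphic to one another, so the paper must first pass to isomorphism-equivalence classes of these subtrees (again invoking Coding Lemma~1 and Class-Comprehension) and attach one representative of each class to the new top node. This is a fixable technicality, but it is another place where the correctness of the construction silently depends on the lemma you are missing.
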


The coding between $\mathcal{M}$ and $M^+$ is the key to prove the theorem. So before proving this theorem we will prove two useful fact about the coding.

As we have seen there can be more than one coding pair for an $x\in M^+$. Of course these coding pairs are isomorphic because they are all built according to Definition \ref{codingdefi} but we also would like to know that they are isomorphic in $\mathcal{M}$. For elements of $M^+$ that can be coded by sets in $\mathcal{M}$ this is trivial but for elements that are coded by proper classes we have to show the following:

\begin{lem}[Coding Lemma 1]\label{codinglem1}
Let $\mathcal{M}=(M,\mathcal{C})$ be a transitive $\beta$-model of $MK^*$. Let $N_1, N_2\in \mathcal{C}$ and $R_1, R_2$ be well-founded binary relations in $\mathcal{C}$ such that $(N_1, R_1)$ and $(N_2, R_2)$ are coding pairs as described in Definition \ref{codingdefi}. Then if there is an isomorphism between $(N_1, R_1)$ and $(N_2, R_2)$  there is such an isomorphism in $\mathcal{C}$.
\end{lem}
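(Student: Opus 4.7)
The plan is to define the desired isomorphism $F \in \mathcal{C}$ by a canonical formula that uses the external isomorphism only to witness nonemptiness, and to leverage the special properties of coding pairs (especially (a) and (b)) to force uniqueness so that the resulting class is actually a function. The formula I would use is $\varphi(x,y)$: there exist $n \in \omega$ and finite sequences $z_0,\dots,z_n \in N_1$, $y_0,\dots,y_n \in N_2$ such that $z_0 = a_1$, $y_0 = a_2$, $z_n = x$, $y_n = y$, $z_{i+1} R_1 z_i$ and $y_{i+1} R_2 y_i$ for $i < n$, and $(N_1,R_1)\restriction z_i$ is isomorphic to $(N_2,R_2)\restriction y_i$ for each $i \leq n$. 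The last clause hides a second-order existential quantifier (``there is a class $G$ which is an $R$-preserving bijection between the two restrictions''), but this is legal in MK$^*$ via Class-Comprehension; so $F = \{(x,y) : \varphi(x,y)\}$ is a class of $\mathcal{C}$.

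Next I would verify that $F$ is total and single-valued. For totality, given $x \in N_1$, let $x \mathrel{R_1} z_{n-1} \mathrel{R_1} \cdots \mathrel{R_1} z_1 \mathrel{R_1} a_1$ be the unique $R_1$-chain from $x$ to $a_1$ (property (a)), and set $y_i := \sigma(z_i)$ where $\sigma$ is the given external isomorphism; then $\sigma$ itself, restricted appropriately, witnesses each subtree isomorphism, so $\varphi(x,\sigma(x))$ holds. For single-valuedness, suppose $\varphi(x,y)$ and $\varphi(x,y')$ are witnessed by sequences in $N_2$. Property (a) forces the $N_1$-sequence to be the common chain to $x$, so both $N_2$-sequences match the same sequence of prescribed isomorphism types. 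I then prove $y_i = y'_i$ by induction on $i$: the base case $y_0 = y'_0 = a_2$ is given, and inductively $y_{i+1}$ and $y'_{i+1}$ are both direct $R_2$-children of $y_i = y'_i$ whose subtrees are isomorphic to $(N_1,R_1)\restriction z_{i+1}$, so property (b) applied at $y_i$ forces $y_{i+1} = y'_{i+1}$.

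To finish I would show that $F$ is an isomorphism: injectivity follows by the symmetric argument in $N_1$ (property (b) applied at the corresponding $z_i$'s); surjectivity follows by running the same construction from $\sigma^{-1}$; and $F$ preserves and reflects the edge relation because an $R_1$-edge $x_1 \mathrel{R_1} x_2$ corresponds to appending one step to the canonical chain, which is mirrored on the $N_2$ side because the chains of $F(x_1)$ and $F(x_2)$ must agree on their common initial segment by the uniqueness argument just given.

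The main obstacle is the single-valuedness step, since distinct nodes of $N_2$ at the same level or on different branches can carry isomorphic subtrees (as illustrated in the examples after Definition \ref{codingdefi}); this is precisely why the defining formula $\varphi$ must track the entire path from the top, rather than merely the isomorphism type of the subtree below $y$. It is properties (a) and (b) together — uniqueness of the level chain, and non-isomorphism of distinct direct subtrees at a common node — that allow the path-tracking version of $\varphi$ to define a genuine function within $\mathcal{C}$. The use of a $\beta$-model is implicit here too: it guarantees that external well-foundedness of the $R_i$ agrees with internal well-foundedness, so that the ``length'' of the chains referred to in $\varphi$ is the same internally and externally and the induction on $i$ is legitimate.
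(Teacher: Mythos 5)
Your definition of $F$ by the path-tracking formula $\varphi$, and your use of properties (a) and (b) to obtain single-valuedness and injectivity, are sound as far as they go. But there is a genuine gap at the totality step, and it sits exactly where the real content of the lemma lies. For Class-Comprehension to yield $F\in\mathcal{C}$, the clause ``$(N_1,R_1)\restriction z_i$ is isomorphic to $(N_2,R_2)\restriction y_i$'' must be read internally, i.e.\ as ``there exists a class $G\in\mathcal{C}$ which is such an isomorphism'' (an external isomorphism is not expressible in the language of $\mathcal{M}$, so the external reading would not give a class of $\mathcal{C}$ at all). To verify $\varphi(x,\sigma(x))$ you then propose to use ``$\sigma$ itself, restricted appropriately'' as the witness $G$ --- but $\sigma$ is only given externally, and its restriction to a (possibly proper-class-sized) subtree is not known to lie in $\mathcal{C}$. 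That is precisely the statement of the lemma for the sub-coding-pairs, so the argument is circular: totality of $F$ amounts to the assertion that every pair of externally isomorphic subtrees along the two chains is \emph{internally} isomorphic, which is a strengthening of what you are trying to prove. The same problem recurs in your surjectivity step, which invokes $\sigma^{-1}$ in the same way.

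The missing ingredient is an induction along the well-founded relation $R_1$ (this is where clause (d) and the $\beta$-model hypothesis do real work), and it cannot be bypassed by writing down a defining formula. The paper's proof takes an $R_1$-minimal node $a_1$ whose subtree is externally but not internally isomorphic to some subtree $U_2$ of $T_2$; by minimality, every externally isomorphic pair of direct subtrees below $a_1$ and $a_2$ already admits an isomorphism in $\mathcal{C}$. Since $a_1$ may have class-many children, each pair admitting many internal isomorphisms, Class-Bounding is used to collect witnessing isomorphisms into a single class $B$ and Global Choice to select a canonical index $c(a_{1,i},a_{2,j})$ for each relevant pair, so that the selected isomorphisms $(B)_{c(a_{1,i},a_{2,j})}$ amalgamate into a single isomorphism in $\mathcal{C}$ between $U_1$ and $U_2$, a contradiction. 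If you insert this induction to establish that each subtree of $N_1$ is internally isomorphic to its $\sigma$-image, then your formula $\varphi$ does define the graph of $\sigma$ and the rest of your verification goes through; but at that point the lemma has already been proved, and the formula is doing no further work.
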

\begin{proof}
Let $T_1, T_2$ be the coding trees associated to the coding pairs $(N_1, R_1)$, $(N_2, R_2)$. Assume to the contrary that there is an isomorphism between $T_1$ and $T_2$ but not one in $\mathcal{C}$. It follows that the tree below the top node of $T_1$ is isomorphic to the tree below the top node of $T_2$, but there is no such isomorphism in $\mathcal{C}$. Then, as $T_1$ and $T_2$ are well-founded we can choose a $T_1$-minimal node $a_1$ of $T_1$ such that for some node $a_2$ of $T_2$ the tree $U_1$ (the tree $T_1$ below and including $a_1$) is isomorphic to $U_2$ (the tree $T_2$ below and including $a_2$) but there is no isomorphism in $\mathcal{C}$. Because of the minimality of $a_1$ we know that for every node $a_{1,i}$ of $U_1$ just below $a_1$ and every node $a_{2,j}$ of $U_2$ just below $a_2$, if $U_{1,i}$ is isomorphic to $U_{2,j}$ then there is an isomorphism in $\mathcal{C}$. Moreover the property ``$U_{1,i},U_{2,j}$ are ismorophic'' is expressible in $(M,\mathcal{C})$.\\
\\

\setlength{\unitlength}{1cm} 
\begin{picture}(1,1)
\put(2.9,1.4){$T_1$}

\put(3.25,0.5){\line(-1,-2){0.25}}
\put(3,1){\line(1,-2){0.25}}
\put(3.25,-0.5){\line(-1,-2){0.25}}
\put(3,0){\line(1,-2){0.25}}
\put(3,1){\line(-1,-1){1}}
\put(3,1){\line(1,-1){1}}

\put(3,-1){\line(-1,-1){1}}
\put(3,-1){\line(1,-1){1}}
\put(3,-1){\line(0,-1){0.5}}

\put(3,-1.5){\line(-1,-2){0.5}}
\put(3,-1.5){\line(1,-2){0.5}}

\put(3.06,-1){$a_1$}
\put(2,-1,5){$U_1$}
\put(3.06,-1,6){$a_{1,i}$}
\put(2.7,-2.4){$U_{1,i}$}

\put(7.9,1.4){$T_2$}

\put(8.25,0.5){\line(-1,-2){0.25}}
\put(8,1){\line(1,-2){0.25}}
\put(8.25,-0.5){\line(-1,-2){0.25}}
\put(8,0){\line(1,-2){0.25}}
\put(8,1){\line(-1,-1){1}}
\put(8,1){\line(1,-1){1}}

\put(8,-1){\line(-1,-1){1}}
\put(8,-1){\line(1,-1){1}}
\put(8,-1){\line(0,-1){0.5}}

\put(8,-1.5){\line(-1,-2){0.5}}
\put(8,-1.5){\line(1,-2){0.5}}

\put(8.06,-1){$a_2$}
\put(7,-1,5){$U_2$}
\put(8.06,-1,6){$a_{2,j}$}
\put(7.7,-2.4){$U_{2,j}$}
\end{picture}\\
\\

Now we can apply the Class Bounding Principle of MK$^*$ to get a class $B$ so that for each $a_{1,i}$, $a_{2,j}$ for which $U_{1,i}$, $U_{2,j}$ are isomorphic, $(B)_c$ is such an isomorphism for some set $c$. Using the global well-order of $M$ we can choose a unique $c(a_{1,i}, a_{2,j})$ for each relevant pair $\langle a_{1,i}, a_{2,j}\rangle$ and combine the isomorphisms $(B)_{c(a_{1,i}, a_{2,j})}$ to get an isomorphism between $U_1$ and $U_2$ in $\mathcal{C}$, which is a contradiction.

\end{proof}
So all coding trees of the same element of $M^+$ are isomorphic in $\mathcal{C}$. For the converse it is obvious that two isomorphic coding trees code the same element in $M^+$ as they give rise to the same $(\tilde{M}_x, \tilde{R}_x)$.

The next lemma shows that we are able to see something of the coding in $M^+$:

\begin{lem}[Coding Lemma 2]\label{codinglem2}
For all $x\in M^+$ there is a one-to-one function $f\in M^+$ such that $f:\, x\to M_x$, where $(M_x, R_x)$ is a coding pair for $x$.

\end{lem}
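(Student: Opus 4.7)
The plan is to define $f$ canonically from $(M_x,R_x)$ and then exhibit a coding pair for $f$ in $\mathcal{C}$. For each $b$ with $b\,R_x\,a_x$, the sub-pair $(M_x,R_x)\restriction b$ is itself a coding pair and hence, via its quotient and Mostowski's theorem, codes a unique set $y_b\in M^+$; I set $f(y_b):=b$. By property~$b)$ of Definition~\ref{codingdefi}, distinct children $b\ne b'$ of $a_x$ have non-isomorphic direct subtrees, so $y_b\ne y_{b'}$; this shows simultaneously that the assignment is well-defined and that $f$ is one-to-one. Totality of $f$ on $x$ follows by transporting back along the isomorphism $(\tilde M_x,\tilde R_x)\cong(TC(\{x\}),\in)$: every $y\in x$ corresponds to some $[b]$ immediately below $[a_x]$ in the quotient, and any representative $b\,R_x\,a_x$ of that class satisfies $y_b=y$.

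To show $f\in M^+$, I would exhibit a coding pair $(M_f,R_f)\in\mathcal{C}$ for $f$, viewing $f$ as the set of Kuratowski pairs $\{\{\{y_b\},\{y_b,b\}\}:b\,R_x\,a_x\}$. Using Class-Comprehension with parameters $M_x,R_x,a_x$ and the Global Choice well-order of $M$, I would build $M_f$ as the disjoint union of a fresh root $a_f$; for each $b$ a node $p_b$ below $a_f$ coding $(y_b,b)$ with Kuratowski children $s_b,t_b$ (collapsing to a single child when $y_b=b$); wherever a code for $y_b$ is required, a fresh tagged copy of $(M_x,R_x)\restriction b$; and wherever a code for $b\in M$ is required, the canonical membership unfolding of $b$ (a set in $M$ by ZFC). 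The edge relation $R_f$ is then a class of $\mathcal{M}$ by Class-Comprehension as well.

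It remains to verify clauses~$a)$--$d)$ of Definition~\ref{codingdefi}. Clauses~$a)$ and $d)$ are immediate from the layering and from the well-foundedness of $R_x$ and of $\in$ on $M$. Clause~$b)$ holds because at $a_f$ the subtrees below distinct $p_b$ code distinct pairs, below each $p_b$ the Kuratowski children code $\{y_b\}\ne\{y_b,b\}$ (with the degenerate case $y_b=b$ handled separately), and deeper down the property is inherited from $(M_x,R_x)$ and from the membership unfoldings. The one genuinely delicate point is clause~$c)$: since $y_b$ is coded twice inside each pair's subtree and many $b$'s run in parallel, naive copies would share descendants on common levels; the explicit tagging is exactly what prevents this, and this bookkeeping is the main technical obstacle.
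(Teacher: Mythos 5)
Your proposal is correct and follows essentially the same route as the paper: define $f$ as the set of pairs matching each $y\in x$ with its node below $a_x$, build a Kuratowski-pair coding tree for each such pair by combining a (relabelled/tagged) copy of the subtree coding $y$ with the membership unfolding of the node, and join these under a fresh root using Class-Comprehension and Global Choice. You are in fact somewhat more careful than the paper in checking clauses a)--d) of the coding-pair definition (in particular the disjointness needed for clause c) and the degenerate case $y_b=b$), which the paper handles only implicitly via its remark about relabelling nodes.
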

\begin{proof}
Let $T_x$ be a coding tree for $x$ and for each $y\in x$ let $T_y$ is the subtree of $T_x$ with top node $a_y$ lying just below the top node of $T_x$ such that $T_y$ codes $y$. Note that the choice of $a_y$ is unique after having fixed the tree $T_x$.

To show that $f=\{\langle y,a_y\rangle\,|\, y\in x\}$ belongs to M$^+$, we have to find a coding tree for $f$. Firstly we construct a coding tree $T_{\langle y, a_y\rangle}$ for every $\langle y, a_y\rangle$ with $y\in x$. As $a_y$ is a set in $M$, it is a set in $M^+$ and therefore coded by some $T_{a_y}$. So we can build $T_{\langle y, a_y\rangle}$ by connecting the trees $T_y$ and $T_{a_y}$. To make sure that the relation $R_{\langle y, a_y\rangle}$ on the new tree is well-defined we can relabel the nodes of the tree $T_{a_y}$ and so we get the following picture:\\
\\

\setlength{\unitlength}{1cm} 
\begin{picture}(1,1)
\put(1,1.1){$\{\{a'_y\},\{a'_y, a_y\}\}= a_{\langle y, a_y\rangle}$}
\put(2,1){\line(-1,-1){1}}
\put(2,1){\line(1,-1){1}}

\put(3.1,-0.5){$a_y$}
\put(2.8,-1.3){$T_{a_y}$}
\put(3,-0.5){\line(-1,-2){0.5}}
\put(3,-0.5){\line(1,-2){0.5}}

\put(0.2,0){$\{a'_y\}$}
\put(3.1,0){$\{a'_y, a_y\}$}
\put(1,0){\line(0,-1){0.5}}
\put(3,0){\line(0,-1){0.5}}
\put(3,0){\line(-4,-1){2}}

\put(0.5,-0.5){$a'_y$}
\put(0.8,-1.3){$T_y$}
\put(1,-0.5){\line(-1,-2){0.5}}
\put(1,-0.5){\line(1,-2){0.5}}

\end{picture}\\
\\
\\
\\
\\
In this way we code every pair $\langle y, a_y\rangle$ with $y\in x$ and we can now join all the codes to code $f$.

Let $(M_f, R_f)$ be the following pair: $M_f=\bigcup_{z\in x} M_{\langle z, a_z\rangle}\cup\{a_f\}$ where $a_f\in M$ and $a_f\notin M_{\langle z, a_z\rangle}$ for every $z\in x$. 
Then $R_f$ is the binary relation which is defined using $R_x$ as parameter:
\begin{align*}
R_f=\{\langle v,w\rangle\,|&\,\text{ for some }y\in x\text{ either }\langle v,w\rangle\in R_{\langle y, a_y\rangle}\text{ or}\\
& v=a_{\langle y, a_y\rangle}\text{ and }w=a_f\}
\end{align*}
$M_f$ and $R_f$ are well-defined because of Class Comprehension in $MK^*$ and so $f$ is coded by the tree $T_f$ which is ordered by $R_z$ below every $a_z$ and by putting $a_{\langle y,a_y\rangle}$ below $a_f$ otherwise.

\end{proof}

Now we give the proof of Theorem \ref{maintheorem}.

\begin{proof}
a) We show that if $\mathcal{M}$ is a $\beta$-model of $MK^*$ then $M^+\models SetMK^*$. The first step is proving that $M^+$ satisfies $ZFC^-$ with Set-Bounding. 

Observe that $M^+$ is transitive: Let $x\in M^+$. Then for every $y\in x$ there is a coding tree for $y$ (namely the corresponding subtree of $T_x$). Therefore $y\in M^+$ and so $x\subseteq M^+$. From transitivity it follows that Extensionality and Foundation hold in $M^+$; Infinity follows as $\omega\in M^+$.

Pairing: Let $x,y$ be coded by $T_x, T_y$ respectively. Then $\{x,y\}$ is coded by the tree:\\
\\

\setlength{\unitlength}{1cm} 
\begin{picture}(1,1)

\put(3,1){\line(-1,-1){1}}
\put(3,1){\line(1,-1){1}}

\put(2,0){\line(-1,-2){0.5}}
\put(2,0){\line(1,-2){0.5}}

\put(4,0){\line(-1,-2){0.5}}
\put(4,0){\line(1,-2){0.5}}

\put(3.03,1){$\{a_{x},a_y\}$}
\put(4.02,0){$a_y$}
\put(3.9,-0.9){$T_y$}
\put(1.55,0){$a_x$}
\put(1.9,-0.9){$T_x$}

\end{picture}\\
\\
\\
\\
Union: Let $x$ be coded by $T_x$:\\
\\

\setlength{\unitlength}{1cm} 
\begin{picture}(1,1)
\put(5.9,1.4){$T_x$}

\put(6,1){\line(0,-1){1}}
\put(6,1){\line(-2,-1){2}}
\put(6,1){\line(1,-1){1}}
\put(6,1){\line(1,-2){0.5}}

\put(6,0){\line(0,-1){1}}
\put(6,0){\line(1,-1){1}}
\put(6,0){\line(-1,-2){0.5}}
\put(5.5,-1){\line(-1,-2){0.5}}
\put(5.5,-1){\line(1,-2){0.5}}

\put(4,0){\line(0,-1){1}}
\put(4,0){\line(-1,-1){1}}
\put(4,0){\line(1,-2){0.5}}
\put(3,-1){\line(-1,-2){0.5}}
\put(3,-1){\line(1,-2){0.5}}

\put(6.02,1){$a_{x}$}
\put(6.02,0){$a_z$}
\put(5,-1){$a_{z0}$}
\put(5.25,-2){$T_{a_{z0}}$}
\put(6.02,-1){$a_{z1}$}

\put(3.55,0){$a_y$}
\put(3.5,-1){$a_{y1}$}
\put(2.5,-1){$a_{y0}$}
\put(2.75,-2){$T_{a_{y0}}$}

\end{picture}\\
\\
\\
\\
\\
\\
The obvious way to code $\bigcup x$ would be to join the $a_{y0}, a_{y1}, \ldots, a_{z0}, a_{z1}, \ldots$ together by one top node $a_{\bigcup x}$. But in general this is not a coding tree by reasons of isomorphism: Our coding trees have the property that subtrees which are connected to the same node on the next level above are all pairwise non-isomorphic. In this case that means that the trees $T_{a_y}, T_{a_z}, \ldots$ are pairwise non-isomorphic, as are the trees $T_{a_{y0}}, T_{a_{y1}}, \ldots$ and the trees $T_{a_{z0}}, T_{a_{z1}}, \ldots$ and so on. But, as we explained before, it can happen that some of the $T_{a_{yi}}$ are isomorphic to, for example, some of the $T_{a_{zj}}$. So if we connect these trees by a top node the resulting tree would have isomorphic subtrees connected by the same node on the next level and therefore would not be a coding tree. This problem can easily be resolved by taking equivalence classes of the subtrees of $T_x$ from the second level below $a_x$ (where two trees are equivalent if the are isomorphic). Then we take a representative from each equivalence class and connect them to the top node $a_{\bigcup x}$ (as before, this is possible by Class Comprehension in $MK^*$ and Coding Lemma 1). 

To prove Comprehension and Bounding we need to take a closer look at how formulas in $M^+$ translate to formulas in $\mathcal{M}$:
\begin{lem}\label{codingformula}
For each first-order formula $\varphi$ there is a formula $\psi$ of second-order class theory such that for all $x_1, \ldots, x_n\in M^+$, $M^+\models\varphi(x_1,\ldots,x_n)$ if and only if $\mathcal{M}\models\psi(c_1, \ldots, c_n)$ for any choice of codes $c_1, \ldots, c_n$ for $x_1,\ldots, x_n$.
\end{lem}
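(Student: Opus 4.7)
The proof proceeds by induction on the complexity of $\varphi$. The coding pair construction of Definition \ref{codingdefi} supplies a dictionary between elements of $M^+$ and pairs $(M_0,R)\in\mathcal{C}\times\mathcal{C}$ satisfying a fixed second-order condition; I will exhibit, for each $\varphi$, a translation $\psi$ built uniformly from this dictionary and show that the biconditional holds for every admissible choice of codes simultaneously with the induction. The key tools are Coding Lemma 1, which guarantees that isomorphism of coding pairs in the real world can be witnessed by an isomorphism in $\mathcal{C}$, and Class-Comprehension in MK$^*$, which allows the formation of the auxiliary classes used in the translation.

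For the atomic cases, equality $x=y$ in $M^+$ holds iff the quotient structures $(\tilde M_x,\tilde R_x)$ and $(\tilde M_y,\tilde R_y)$ are isomorphic, which in turn (by Mostowski collapse and uniqueness of the transitive isomorph) is equivalent to $(M_x,R_x)$ being isomorphic to $(M_y,R_y)$. By Lemma \ref{codinglem1} such an isomorphism exists iff there exists one in $\mathcal{C}$, so $\psi_{x=y}(c_x,c_y)$ asserts ``$\exists F\in\mathcal{C}$ such that $F$ is an isomorphism between $(M_x,R_x)$ and $(M_y,R_y)$''. For membership, $x\in y$ holds iff the coding tree for $x$ is isomorphic to the subtree of $(M_y,R_y)$ rooted at some node lying directly below the top node of $M_y$; accordingly $\psi_{x\in y}(c_x,c_y)$ asserts ``$\exists a\in M_y\,\exists F\in\mathcal{C}$ with $a\mathrel{R_y}(\text{top of }M_y)$ and $F$ an isomorphism between $(M_x,R_x)$ and $(M_y,R_y)\restriction a$'', again invoking Lemma \ref{codinglem1} to see that existence of an abstract isomorphism is equivalent to existence of one in $\mathcal{C}$.

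The Boolean connectives translate componentwise. For quantifiers, $\exists x\,\varphi(x,x_1,\dots,x_n)$ translates as ``there exist $M_0,R\in\mathcal{C}$ such that $(M_0,R)$ is a coding pair and $\psi(M_0,R,c_1,\dots,c_n)$ holds'', with universal quantification dual. Note that the property of being a coding pair is itself uniformly expressible in the two-sorted language of MK$^*$: clauses (a)--(c) of Definition \ref{codingdefi} are first-order over the class parameters, while clause (d), well-foundedness of $R$, is expressible using the class quantifier ``every nonempty subclass of $M_0$ has an $R$-minimal element'' and, crucially, captures real well-foundedness because $\mathcal{M}$ is a $\beta$-model. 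The use of class quantifiers here is essential, since an arbitrary $x\in M^+$ may only be coded by proper classes of $\mathcal{M}$.

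The main subtlety is \emph{independence from the choice of codes}: one must verify that $\mathcal{M}\models\psi(c_1,\dots,c_n)$ does not depend on which codes $c_i$ of $x_i$ are chosen. I would establish this clause-for-clause along with the induction. At the atomic level it is immediate, since replacing $c_i$ by an isomorphic $c_i'$ (possible by Lemma \ref{codinglem1}, which also provides an isomorphism in $\mathcal{C}$ that can be composed with the witnessing $F$) preserves the existence of the asserted isomorphism. Boolean cases are trivial, and in quantifier cases the outer class quantifier ranges over \emph{all} coding pairs for the various witnesses, so the truth value is manifestly invariant under re-coding of the free variables. The principal technical hurdle is precisely this bookkeeping: ensuring that at every inductive step the translation remains uniform and that Coding Lemma 1 is available to absorb any change of representative introduced by the outer quantifier structure.
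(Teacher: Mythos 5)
Your proof is correct and follows essentially the same route as the paper's: induction on $\varphi$, with the atomic cases handled by ``$c_y$ is isomorphic (in $\mathcal{C}$, via Coding Lemma 1) to a direct subtree of $c_x$'' resp.\ ``the codes are isomorphic in $\mathcal{C}$'', Boolean cases componentwise, and quantifiers relativized to coding pairs. Your version is in fact somewhat more careful than the paper's, since you make explicit that ``being a coding pair'' is second-order expressible (using the $\beta$-model hypothesis for clause (d)) and you verify independence from the choice of codes, both of which the paper leaves implicit.
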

\begin{proof}
The proof is by induction over the complexity of the formula $\varphi$. For the first atomic case assume that $M^+\models y\in x$. Let $c_x$ and $c_y$ be codes for $x$ and $y$ respectively and let $T_x, T_y$ be the associated coding trees. As we know that $y\in x$ it follows that there is a direct subtree $T_{y'}$ of $T_x$ such that $T_{y'}$ is a coding tree for $y$ ( ``direct subtree'' means a subtree whose top node lies just below the top node of the original tree). As $T_{y'}$ and $T_y$ are both codes for $y$ they are isomorphic and by Coding Lemma $1$ we know that they are isomorphic in $\mathcal{M}$. So $\mathcal{M}\models\text{``$c_y$ is isomorphic to a direct subtree of $c_x$''}$ and this therefore is the desired $\psi$.

For the second atomic case assume that $M^+\models y=x$. Let $c_x$ and $c_y$ be codes for $x$ and $y$ respectively. As $y=x$, $c_y$ is also a code for $x$ and again by Coding Lemma $1$ we know that the codes are isomorphic in $\mathcal{M}$ thus giving us the desired $\psi$.

The cases of $\neg\varphi$, $\varphi_1\wedge\varphi_2$  follow easily by using the induction hypothesis. For the quantor case assume that $M^+\models\forall x\varphi$. Let $c_x$ be a code for $x$. By induction hypothesis let $\psi$ be the second-order formula associated to $\varphi$ such that $\mathcal{M}\models\psi$. Then $M^+\models\forall x \varphi$ translates to $\mathcal{M}\models\forall c_x \psi$.
\end{proof}

Comprehension: Let $a, x_1, \ldots, x_n\in M^+$ and let $\varphi(x,x_1, \ldots, x_n,a)$ be any first-order formula. We will show that $b=\{x\in a: M^+\models\varphi(x,x_1, \ldots, x_n,a)\}$ in an element of M$^+$ by using Class Comprehension in $\mathcal{M}$ to find the corresponding $B\in \mathcal{C}$ and build from it a coding tree for $b$.

Let $T_{x_1}, \ldots, T_{x_n},T_a$ be codes for the corresponding elements of $M^+$ and let $\psi$ be the formula corresponding to $\varphi$ provided by Lemma \ref{codingformula}. Assume that $b$ is non-empty, i.e. that there is  $x_0$ in $a$ such that $\varphi$ holds. Therefore there is a $c_0$ such that $\psi(c_0, T_{x_1}, \ldots, T_{x_n},T_a)$ holds. Let $c$ be a variable that varies over the level directly below the top level of $T_a$ so that each $T_{a(c)}$ denotes a direct subtree of $T_a$. Then by Class Comprehension there is a class $B$ such that if $\psi(T_{a(c)}, T_{x_1}, \ldots, T_{x_n},T_a)$ holds then $(B)_c$ is the direct subtree $T_{a(c)}$ of $T_a$ and if not then $(B)_c$ is $T_{c_0}$. 

So let $T_b$ be the coding tree with top node $a_b$ and whose direct subtrees are all of the $(B)_c$:
\\
\\

\setlength{\unitlength}{1cm} 
\begin{picture}(1,1)
\put(1,0.7){$T_b$}

\put(3,1){\line(0,-1){1}}
\put(3,1){\line(-1,-1){1}}
\put(3,1){\line(1,-1){1}}
\put(3,1){\line(-1,-2){0.5}}
\put(3,1){\line(1,-2){0.3}}
\put(3,0){\line(-1,-2){0.5}}
\put(3,0){\line(1,-2){0.5}}
\put(3.02,1){$a_b$}
\put(3.1,-0.1){$a(c)$}
\put(2.65,-0.9){$T_{a(c)}$}

\put(5.5,-0.9){ordered by $R_{a(c)}$}

\end{picture}\\
\\
\\
\\
Then $T_b$ codes $b\in M^+$ with $b=\{x\in a: \varphi(x, x_1,\ldots, x_n,a)\}$.\\

Bounding: We have to show that for $a\in M^+$ and $\varphi$ a first-order formula
\begin{equation*}
M^+\models\forall x\in a\,\exists y\,\varphi(x,y)\to\exists b\,\forall x\in a\,\exists y\in b\,\varphi(x,y).
\end{equation*}
So assume that $\forall x\in a\,\exists b\,\varphi(x,y)$.
Let $T_y, T_a$ be coding trees for $y$ and $a$ respectively and let $\psi$ be the second-order formula corresponding to $\varphi$ provided by Lemma \ref{codingformula}. By Class-Bounding in MK$^*$ we know that
\begin{equation*}
\exists B\,\forall T_x\text{ direct subtree of } T_a\exists y'\,\psi(T_x, (B)_{y'}),
\end{equation*}
where $(B)_{y'}=\{z\,|\,(y',z)\in B\}$. By Class Comprehension we can join together all the section $(B)_{y'}$ which are coding trees $T_{(B)_{y'}}$ to obtain a tree $T_b$ with top node $a_b$ such that the $T_{(B)_{y'}}$ are the direct subtrees of $T_b$. It follows that in $\mathcal{M}$ there is a tree $T_b$ such that for every tree $T_x$ subtree of $T_a$ there is a $T_{(B)_{y'}}$ direct subtree of $T_b$ such that $\psi(T_x, T_{(B)_{y'}})$ and the tree $T_b$ gives us the desired $b$ in $M^+$.\\

Replacement: Follows from Comprehension and Bounding.\\

Choice: We have to show that every element of $M^+$ can be well-ordered (we aim for the strongest version of the axiom of Choice in a set-theory without Power Set (see \cite{Zarach:1982}). So let $x\in M^+$ and let $T_x$ be a coding tree for $x$ with top node $a_x$. We know that the direct subtrees $T_y$ of $T_x$ code the elements $y$ of $x$ and their top nodes $a_y$ are elements of $M$. As we have a well-order of $M$ we can well-order the class $B=\{a_y\,|\, a_y$ is the top node of a direct subtree $T_y$ of $T_x\}$. We call this well-order $W$. Now we can build a tree for every pair $\langle a_y, a_z\rangle\in W$ by using the trees $T_y, T_z$ analogous as we did in the proof of Coding Lemma $2$:\\
\\

\setlength{\unitlength}{1cm} 
\begin{picture}(1,1)
\put(1,1.1){$\{\{a_y\},\{a_y, a_z\}\}= a_{\langle y, z\rangle}$}
\put(2,1){\line(-1,-1){1}}
\put(2,1){\line(1,-1){1}}

\put(3.1,-0.5){$a_z$}
\put(2.8,-1.3){$T_z$}
\put(3,-0.5){\line(-1,-2){0.5}}
\put(3,-0.5){\line(1,-2){0.5}}

\put(0.2,0){$\{a_y\}$}
\put(3.1,0){$\{a_y, a_z\}$}
\put(1,0){\line(0,-1){0.5}}
\put(3,0){\line(0,-1){0.5}}
\put(3,0){\line(-4,-1){2}}

\put(0.5,-0.5){$a_y$}
\put(0.8,-1.3){$T_y$}
\put(1,-0.5){\line(-1,-2){0.5}}
\put(1,-0.5){\line(1,-2){0.5}}

\end{picture}\\
\\
\\
\\
\\
So for every $\langle a_y, a_z\rangle\in W$ we get a coding tree for the pair $\langle y,z\rangle$ with $y,z\in x$. As we have shown in the proof of Coding Lemma 2 we can now join together the trees  by a single top node $a_w$ using Class Comprehension. We now get a tree $T_w$ which is a coding tree for an element $w$ of $M^+$ and $w$ is a well-order of $x$.\\
\begin{rem}
The next two results below (b and c) will show, that there even is a global choice function for the sets in $V_{\kappa}^{M^+}$ for $\kappa$ an inaccessible cardinal, as there is a class which well-orders $M$ and we will show that every class in $\mathcal{C}$ is an element of $M^+$.
\end{rem}

b) We have to show that $\mathcal{C}=P(M)\cap M^+$. So assume that $X\in \mathcal{C}$ and $y\in X$. Then $y\in M$ and so can be coded by the following tree: $y$ is the top node of the tree $T_y$. On the first level below the top node there are nodes for every element of $y$ which are named by pairwise different elements $z_i$ of $M\setminus\{y\}$. On the first level below such an $z_i$ there are nodes for every element in $z_i$ named by pairwise different elements $v_j$ of $M\setminus\{y, z_i\}$ and so on. So $T_y$ is a coding tree for $y$ and  therefore $y\in M^+$. This can be done for all $y\in X$ and by Class Comprehension the trees $T_y$ can be connected to a tree $T_X$ with top node $a_X$. Then the pair $(M_X, R_X)$ gives a code for $X$ with $M_X= \bigcup_{y\in X} M_y\cup\{a_X\}$ and 
\begin{equation*}
R_X=\{\langle v,w\rangle\,|\text{ for some }y\in X\text{ either }\langle v,w\rangle\in R_y\text{ or }v=a_y\text{ and }w=a_X\}
\end{equation*}
Therefore $X\in M^+$. 

For the converse, let $x\in M^+$ and $x\subseteq M$. Then there exists a coding pair $(M_x, R_x)$ of $x$ such that $(\tilde{M}_x, \tilde{R}_x)\cong(TC\{x\}, \in)$ (see Lemma \ref{codingx}). As $(\tilde{M}_x, \tilde{R}_x)$ is in $\mathcal{C}$, has rank Ord($M$) and we can build $TC(\{x\})$ by transfinite induction from $(\tilde{M}_x, \tilde{R}_x)$, we can decode $x$ in $\mathcal{C}$ and so $x\in\mathcal{C}$.\\

c) Now we will show that there is a strongly inaccessible cardinal $\kappa$ in $M^+$ which is the largest cardinal in $M^+$ and the elements of $M$ (the sets in $\mathcal{M}$) are exactly the elements of $V_{\kappa}^{M^{+}}$. 

Let $\kappa$ be $Ord(M)$. Then as $\kappa\subseteq M$ and $\kappa\in M^+$ it follows from b) that $\kappa$ is a class in $\mathcal{C}$. Let $f: \beta \to\kappa$ with $\beta$ is a ordinal less than $\kappa$ be a function in $\mathcal{C}$. From the Class Bounding Principle it follows that $f$ is bounded in $\kappa$. So $\kappa$ is regular in $\mathcal{M}$ and therefore regular in M$^+$. Moreover, again by b), any subset of an ordinal $\beta$ of $M$ which belongs to $M^+$ is a class in $\mathcal{C}$ and indeed a set in $M$, so the power set of $\beta$ in $M^+$ equals the power set of $\beta$ in $M$ and so $\kappa$ is strongly inaccessible. It follows that if $x\in M$ then $x\in V_{\kappa}^{M^{+}}$. For the converse let $x\in V_{\kappa}^{M^{+}}$ and let $(M_x, R_x)$ be a coding pair and $T_x$ the associate coding tree for $x$ . By Coding Lemma 2 any coding tree of a set is a set, so $T_x$ is an element of $M$. Clause 3 of the axioms of SetMk$^*$ follows directly from Coding Lemma 2 and so $\kappa$ is the largest cardinal in M$^+$.\\ 

That $M^+$ is unique follows from its construction: Let $M^{++}$ be another such model of SetMK$^*$ (i.e. it is transitive, $\mathcal{C}=P(M)\cap M^{++}$ and $M=V_{\kappa}^{M^{++}}$ with $\kappa$ largest cardinal in $M^{++}$ and strongly inaccessible cardinal in $M^{++}$). Then M$^+$ and M$^{++}$ have the same largest cardinal $\kappa$, they have the same subsets of $\kappa$ and as every set in both models can be coded by a subset of $\kappa$ they are the same.

This concludes the proof of Theorem \ref{maintheorem}.
\end{proof}

The converse of Theorem \ref{maintheorem} follows by the corresponding axioms in the SetMK$^*$ model:
\begin{theo}\label{conversemain}
Let $N$ be a transitive model of SetMK$^*$ that has a strongly inaccessible cardinal $\kappa$ that is the largest cardinal, let $\mathcal{C}= P(M)\cap N$ and $M$ is defined to be $V^{N}_\kappa$. Then $\mathcal{M}=(M, \mathcal{C})$ is a $\beta$-model of MK$^*$ and the model $M^+$ derived from $\mathcal{M}$ by Theorem \ref{maintheorem} equals $N$.
\end{theo}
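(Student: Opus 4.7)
My plan is to verify that $(M,\mathcal{C})$ is a $\beta$-model of MK$^*$ and then invoke the uniqueness clause of Theorem \ref{maintheorem} to conclude $M^+=N$ in one stroke: both $M^+$, as produced from $\mathcal{M}$ by that theorem, and $N$, by the hypotheses of the present theorem, are transitive sets satisfying properties (a)--(c) of Theorem \ref{maintheorem}, so they must coincide.

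First I would check the axioms of MK$^*$. The set axioms are immediate because $M=V_\kappa^N$ with $\kappa$ inaccessible in $N$. Extensionality and Foundation for classes transfer from $N$ since $\mathcal{C}\subseteq N$ and $N$ is transitive. For Replacement, I would apply Replacement in $N$ to a function $F\in\mathcal{C}$ and $x\in M$ to obtain the range in $N$, and then use Set-Bounding and the regularity of $\kappa$ in $N$ to confine this range to some $V_\alpha^N$ with $\alpha<\kappa$, placing it in $M$. Class-Comprehension will be handled by translating any MK-formula to a first-order formula in $N$, where set quantifiers are bounded to $V_\kappa$ and class quantifiers are replaced by the first-order clause ``for all $X$ with $X\subseteq V_\kappa$'', after which Separation in $N$ delivers the required subset of $V_\kappa$. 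Global Choice comes from axiom 3 of SetMK$^*$: an injection $V_\kappa\hookrightarrow\kappa$ in $N$ supplies a class well-ordering of $M$. The only interesting axiom is Class-Bounding: from $\forall x\,\exists A\,\varphi(x,A)$ in $\mathcal{M}$, Set-Bounding in $N$ with $a=V_\kappa$ yields a set $B'\in N$ whose members are witnesses $A\subseteq V_\kappa$; applying axiom 3 again I would fix a bijection $g:\alpha\to B'$ with $\alpha\le\kappa$ and set $B=\{(\gamma,z):\gamma<\alpha,\,z\in g(\gamma)\}$, which lies in $V_\kappa\times V_\kappa\subseteq V_\kappa$ (since $\kappa$ is a strong limit) and so is a class with $(B)_\gamma=g(\gamma)$, as required.

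Next I would verify the $\beta$-property. Each $R\in\mathcal{C}$ is a set of $N$, and the nonempty subclasses of $\mathrm{field}(R)$ in $\mathcal{C}$ are exactly the nonempty subsets of $\mathrm{field}(R)$ in $N$, so $\mathcal{M}\models R$ is well-founded iff $N\models R$ is well-founded; absoluteness of set-well-foundedness for the transitive model $N$ of ZF$^-$ then gives the equivalence with real well-foundedness. With MK$^*$ and $\beta$-ness in hand, Theorem \ref{maintheorem} produces $M^+$ satisfying (a)--(c), and its uniqueness clause forces $M^+=N$. I expect Class-Bounding to be the main obstacle, since it is the one place where both the inaccessibility of $\kappa$ (to keep $B$ inside $V_\kappa$) and axiom 3 of SetMK$^*$ (to re-index the set $B'$ of witnesses by a single ordinal) are essential; without axiom 3 the reduction from Set-Bounding in $N$ to Class-Bounding in $\mathcal{M}$ would not go through.
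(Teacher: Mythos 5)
Your proposal is correct and follows essentially the same route as the paper: transfer the set axioms and Comprehension via the formula translation, derive Class-Bounding from Set-Bounding in $N$ by re-indexing the set of witnesses using axiom 3 of SetMK$^*$ (the paper uses a surjection $f:V_\kappa^N\to b_0$ where you use a bijection $g:\alpha\to B'$, a cosmetic difference), get the $\beta$-property from absoluteness of well-foundedness over the transitive ZF$^-$-model $N$, and conclude $M^+=N$ from the uniqueness clause of Theorem \ref{maintheorem}. Your explicit treatment of Replacement (image bounded below $\kappa$ by Set-Bounding and regularity) fills in a step the paper leaves implicit.
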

\begin{proof}
We have to show that $(M, \mathcal{C})$ fulfills the axioms of MK$^*$: Extensionality, Pairing, Infinity, Union, Power Set, and Foundation follow directly by the corresponding axioms of SetMK$^*$. By the definition of $M$ and $\mathcal{C}$ it follows that every set is a class and elements of classes are sets. 

For the remaining axioms, note that there is an easy converse for Lemma \ref{codingformula}: For each formula $\varphi$ of second-order class theory there is a first-order formula $\psi$ such that for all $x_1, \ldots, x_n\in\mathcal{M}$, $\mathcal{M}\models\varphi(x_1, \ldots, x_n)$ if and only if $N\models\psi(x_1, \ldots, x_n)$. This holds because by assumption all elements of $\mathcal{M}$ are elements of $\mathcal{C}$ or $M$ and therefore elements of $N$ and so $\varphi$ and $\psi$ are the same where the statement that $x$ is a set in $\mathcal{M}$ translates to $x\in V^{N}_\kappa$ and the statement that $X$ is a class in $\mathcal{M}$ translates to $X\in  P(M)\cap N$. So for Class Comprehension we have to show that the following holds:
\begin{equation*}
\forall X_1\ldots\forall X_n\exists Y\; Y=\{x: \varphi(x,X_1,\ldots, X_n)\}
\end{equation*}
where $\varphi$ is a formula containing class parameters in which quantification over both sets and classes is allowed. By the definition of $M$ and $\mathcal{C}$ this statement is exactly the Comprehension Axiom of $N$ where $\psi$ is the first-order formula corresponding to $\varphi$: $y=\{x\in V^{N}_\kappa: N\models\psi(x, x_1, \ldots, x_n, V^{N}_\kappa)$. 

For Class Bounding we have to show:
\begin{equation*}
\forall x\,\exists A\, \varphi(x,A)\to\exists B\,\forall x\,\exists y\, \varphi(x,(B)_{y})
\end{equation*}
where $(B)_{y}=\{z\,\vert\, (y,z)\in B\}$. So assume that $\forall x\,\exists A\, \varphi(x,A)$ holds in $\mathcal{M}$. Then translating this to $N$ we know by Set-Bounding that
\begin{equation*}
\forall x\in V_\kappa^{M^+} \,\exists A\in P(M)\cap M^+  \,\psi(x,A)\to \exists b\,\forall x\in V_\kappa^{M^+}\,\exists y\in b\,\psi(x,y)
\end{equation*}
where $\psi$ is the first-order formula corresponding to $\varphi$. By Set-Comprehension we can form a set $b_0$ from $b$ such that $b_0=\{y\,\vert\, y\in b\wedge y\subseteq V_{\kappa}^{N}\}$. Then there is a function $f\in N$ from $V_{\kappa}^{N}$ onto $b_0$ (as $b_0$ has size less or equal $\kappa$) and so $f$ is also an element of $\mathcal{M}$. The we can define the class $(B)_z=\{w\,\vert\, w\in f(z)\}$ and therefore also $B=\{(z,w)\,\vert\, z\in V_{\kappa}^{N}\wedge w\in f(z)\}$. So Class-Bounding holds.

For Global Choice we have to show that there is a well-ordering of $M$. We know that every element of $N$ can be well-ordered and so $V^{N}_\kappa$ can be well-ordered. The well-order is therefore an element of $\mathcal{C}$.

$(M, \mathcal{C})$ has to be a $\beta$-model: Any well-founded relation in $(M,\mathcal{C})$ corresponds to a well-founded relation in $N$ and because $N$ is a transitive model of ZF$^-$, well-foundedness is absolute (we can define a rank function into the ``real'' ordinals which witnesses the well-foundedness in $V$).

Finally when we build the $M^+$ of $\mathcal{M}$ according to Theorem \ref{maintheorem}, $M^+$ and $N$ are both transitive, have the same largest cardinal $\kappa$ and the same subsets of $\kappa$ and are therefore equal.
\end{proof}

\begin{rem}
We can also use this switching between models of MK$^*$ and SetMK$^*$ for class-forcing: Instead of doing class-forcing over MK$^*$ we go to SetMK$^*$ and do a set-forcing there. Note that by doing this indirect version of class-forcing we don't lose the tameness requirement for the forcing: Assume the class-forcing is not tame (as for example a forcing which collapses the universe to $\omega$). Then we go to $M^+\models SetMK^*$ and force with the associated set-forcing. But such a forcing destroys the inaccessibility of $\kappa$ and therefore the preservation of PowerSet in the MK$^*$ extension $\mathcal{M}[G]$.
\end{rem}

\begin{cor} \label{L}
\begin{equation*}
M^+=\bigcup_{C\in\mathcal{C}} L_{\kappa^*}(C).
\end{equation*}
where $\kappa^*$ is the height of $M^+$ and 
\begin{align*}
L_0 (C) = & \,TC(\{C\})\\
L_{\beta + 1} (C) = & \,\text{Def}\,(L_{\beta} (C))\\
L_{\lambda} (C) = & \bigcup_{\beta<\lambda} L_{\beta} (C), \,\lambda \text{ limit}.
\end{align*}
\end{cor}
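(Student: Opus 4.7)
The plan is to prove the two inclusions separately. The inclusion $\bigcup_{C\in\mathcal{C}} L_{\kappa^*}(C) \subseteq M^+$ is the routine direction, while the reverse inclusion rests on the fact, supplied by Theorem \ref{maintheorem}(c) and the third axiom of SetMK$^*$, that every element of $M^+$ admits an injection into $\kappa$ and hence can be reconstructed from a single subset of $\kappa$, which by Theorem \ref{maintheorem}(b) lies in $\mathcal{C}$.

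For $\supseteq$, fix $C \in \mathcal{C} = P(M) \cap M^+$. Since $C \in M^+$ and $M^+$ is a transitive model of ZFC$^-$, each stage $L_\beta(C)$ for $\beta < \kappa^*$ is absolutely definable from $C$ and $\beta$ and is a set of $M^+$. Hence $L_{\kappa^*}(C) = \bigcup_{\beta < \kappa^*} L_\beta(C) \subseteq M^+$, and this holds uniformly in $C \in \mathcal{C}$.

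For $\subseteq$, fix $x \in M^+$. Then $TC(\{x\}) \in M^+$, and axiom 3 of SetMK$^*$ supplies an injection $g : TC(\{x\}) \to \kappa$ in $M^+$. Set $A = g[TC(\{x\})]$ and $E = \{(g(y),g(z)) : y \in z \text{ in } TC(\{x\})\} \subseteq A \times A \subseteq \kappa \times \kappa$. Using a pairing function $\kappa \times \kappa \to \kappa$ (available since $\kappa$ is strongly inaccessible in $M^+$), encode the triple $(A, E, g(x))$ as a single subset $C \subseteq \kappa$. Since $C \subseteq \kappa \subseteq M$ and $C \in M^+$, Theorem \ref{maintheorem}(b) gives $C \in \mathcal{C}$. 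Inside $L_{\kappa^*}(C)$, decode $C$ to recover $A$, $E$ and $g(x)$, and form the Mostowski collapse $\pi : A \to V$ by the well-founded recursion $\pi(\alpha) = \{\pi(\beta) : (\beta,\alpha) \in E\}$. By construction $\pi(g(x)) = x$, so $x$ lies in $L_\alpha(C)$ for any $\alpha < \kappa^*$ at which the recursion terminates and its graph has been captured; in particular $x \in L_{\kappa^*}(C)$.

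The one point requiring care is verifying that the Mostowski collapse $\pi$ (and hence $x$) is available at some bounded stage $L_\alpha(C)$ with $\alpha < \kappa^*$. This uses that $\kappa^*$ is a limit ordinal strictly above $\kappa$ (since $\kappa$ is a set in the ZFC$^-$-model $M^+$, so the $M^+$-ordinals are unbounded past $\kappa$) together with the standard $\Sigma_1$-absoluteness of the Mostowski collapse between transitive models containing the well-founded relation. Since the recursion defining $\pi$ has length at most $\kappa$ and the rank of the resulting set $TC(\{x\})$ is some ordinal below $\kappa^*$, the collapse is captured at some $L_\alpha(C)$ with $\alpha < \kappa^*$, which is the only nontrivial step.
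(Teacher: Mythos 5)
Your proof is correct and follows essentially the same route as the paper: for the hard inclusion both arguments code $(TC(\{x\}),\in)$ by a class $C\in\mathcal{C}$ and then recover $x$ inside $L_{\kappa^*}(C)$ by decoding a well-founded extensional relation via transfinite recursion; the paper takes $C$ to be a coding of the quotient $(\tilde{M}_x,\tilde{R}_x)$ of a coding pair for $x$, while you manufacture $C\subseteq\kappa$ from the injection supplied by axiom 3 of SetMK$^*$ together with Theorem \ref{maintheorem}(b) --- an equivalent choice, and your treatment of the easy inclusion via absoluteness of the $L_\beta(C)$ hierarchy is if anything cleaner than the paper's. One cosmetic quibble: the recursion computing the Mostowski collapse has length the rank of $TC(\{x\})$, which is below $\kappa^*$ but need not be $\leq\kappa$; the clause you add immediately afterwards already states the correct bound, so nothing is lost.
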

\begin{proof}
Let $x\in M^+$. Then there is a coding pair $(M_x, R_x)$ for $x$ such that $(\tilde{M}_x, \tilde{R}_x)$ is isomorphic to $(TC(\{x\}), \in)$. As $\tilde{M}_x$ and $\tilde{R}_x$ are elements of $\mathcal{C}$ we can code the pair $(\tilde{M}_x, \tilde{R}_x)$ by a class $C_x\in \mathcal{C}$. As $C_x$ is an element of M$^+$, $L_{\kappa^*}(C_x)$ is an inner model in M$^+$. But now we can decode $x$ in $L_{\kappa^*}(C_x)$ as we can build $(TC(\{x\})$ by transfinite induction from $(\tilde{M}_x, \tilde{R}_x)$. So $x\in L_{\kappa^*}(C_x)$.

For the converse, let $x\in\bigcup_{C\in\mathcal{C}} L_{\kappa^*}(C)$, i.e. there is an $C_x\in\mathcal{C}$ such that $x\in L_{\kappa^*}(C_x)$. As $L_{\kappa^*}(C_x)$ is an inner model of $\mathcal{M}$, $x$ is an element of $\mathcal{C}$ and by Theorem \ref{maintheorem} b) it is an element of M$^+$.
\end{proof}

\section{Hyperclass Forcing and Forcing in SetMK$^{**}$}
In the last section we have seen how to move back and forth between a model of MK$^*$ and its associated SetMK$^*$ model. Now we will use this relation between a model of class theory and a model of set theory to define hyperclass forcing. A hyperclass is a collection whose elements are classes. The key idea is that instead of trying to formalize forcing for a definable hyperclass forcing notion, we can go to the associated model of SetMK$^*$ where the forcing notion is now a class and so we force with a definable class forcing there and then go back to a new MK$^*$ model. First let us define the relevant notions:
\begin{defi}
Let $\mathcal{M}=(M, \mathcal{C})$ be a model of MK$^*$ and for $\mathbb{P}\subseteq \mathcal{C}$ let $(\mathbb{P}, \leq)=\mathbb{P}$ be an $\mathcal{M}$-definable partial ordering with a greatest element $1^{\mathbb{P}}$. $P, Q\in \mathbb{P}$ are compatible if for some $R$, $R\leq P$ and $R\leq Q$. A definable hyperclass $D\subseteq \mathbb{P}$ is dense if $\forall P\exists Q (Q\leq P\text{ and } Q\in D)$. Then a $\mathbb{G}\subseteq\mathcal{C}$ is called a $\mathbb{P}$-generic hyperclass  over $\mathcal{M}$ iff $\mathbb{G}$ is a pairwise compatible, upward-closed subcollection of $\mathbb{P}$ which meets every dense subcollection of $\mathbb{P}$ which is definable over $\mathcal{M}$.
\end{defi}

We will assume that for each $P\in \mathbb{P}$ there exists $\mathbb{G}$ such that $P\in\mathbb{G}$ and $\mathbb{G}$ is $\mathbb{P}$-generic over $\mathcal{M}$ (this is always possible if the model $\mathcal{M}$ is countable).

To define the structure $(M, \mathcal{C})[\mathbb{G}]$ where $\mathbb{G}$ is a $\mathbb{P}$-generic hyperclass over $(M, \mathcal{C})$ we will use Theorem \ref{maintheorem} and Proposition \ref{conversemain}. By Theorem \ref{maintheorem} we go to the model M$^+\models SetMK^*$. As $\mathbb{P}$ is a subcollection of $\mathcal{C}$ in $\mathcal{M}$ it becomes a subclass of $P(M)\,\cap\, M^+$ and is an $M^+$-definable class, $\mathbb{G}$ remains a pairwise compatible, upward-closed subclass of $\mathbb{P}$ which meets every dense subclass of $\mathbb{P}$ which is definable over $M^+$ and therefore is definable class-generic over $M^+$. Then we define names, their interpretation and the extension of M$^+$ as usual: A $\mathbb{P}$-name in M$^+$ is a set in M$^+$ consisting of pairs $(\tau, p)$ where $\tau$ is a $\mathbb{P}$-name in M$^+$ and $p$ belongs to $\mathbb{P}$ (as we are in the set model we now denote the elements of $\mathbb{P}$ with lower-case letters). Then $\mathcal{N}=\cup\{\mathcal{N}_{\alpha}\,\vert\, \alpha\in Ord(M^+)\}$ is the collection of all names where $\mathcal{N}_0=\emptyset$, $\mathcal{N}_{\alpha+1}=\{\sigma\,\vert\,\sigma\text{ is a subset of }\mathcal{N}\times P\text{ in }M^+\}$ and $\mathcal{N}_{\lambda}=\cup\{\mathcal{N}_{\alpha}\,\vert\,\alpha<\lambda\}$ for a limit ordinal $\lambda$. For a $\mathbb{P}$-name $\sigma$ its interpretation is $\sigma^{\mathbb{G}}=\{\tau^{\mathbb{G}}\,\vert\, p\in \mathbb{G}\text{ for some }(\tau, p)\in\sigma\}$. Then $M^+[\mathbb{G}]$ is the set of all such $\tau^{\mathbb{G}}$. Finally we can define the extension of $\mathcal{M}$:

\begin{defi}
Let $\mathcal{M}=(M, \mathcal{C})$ be a $\beta$-model of $MK^*$, $\mathbb{P}$ be a definable hyperclass forcing and $\mathbb{G}\subseteq\mathbb{P}$ be a $\mathbb{P}$-generic hyperclass over $\mathcal{M}$. Let $M^+$ be the model of SetMK$^*$ associated to $\mathcal{M}$ by Theorem \ref{maintheorem} and assume that $M^+[\mathbb{G}]\models SetMK^*$ with largest cardinal $\kappa$ with $M^+[\mathbb{G}]$ transitive. Then $\mathcal{M}[\mathbb{G}]=(M, \mathcal{C})[\mathbb{G}]$ is the $\beta$-model of MK$^*$ derived from $M^+[\mathbb{G}]$ by Theorem \ref{conversemain}, whose sets are the elements of $V_{\kappa}^{M^+[\mathbb{G}]}$ and whose classes are the subsets of $V_{\kappa}^{M^+[\mathbb{G}]}$ in $M^+[\mathbb{G}]$, where $\kappa$ is the largest cardinal of $M^+[\mathbb{G}]$ and is strongly inaccessible. Such a model is called a definable hyperclass-generic outer model of $\mathcal{M}$.
\end{defi}

This definition assumes that the definable class-forcing $\mathbb{P}$ again produces a model of SetMK$^*$ with the same largest cardinal $\kappa$ where $\kappa$ is strongly inaccessible (we say in short that $\mathbb{P}$ does not change $\kappa$). Unfortunately the assumption that SetMK$^*$ is preserved is not as straightforward as it might seem. Definable class-forcing was developed by \cite{Friedman:2000}. There the concept of pretameness and tameness of a forcing notion is introduced and it is shown that such a forcing has a definable forcing relation and preserves the axioms. In the case of SetMK$^*$ we now have the added problem that we are not forcing over a model of full ZFC but rather over ZFC$^-$, i.e. without the Power Set Axiom. This can cause problems when we use concepts like the hierarchy of the $V_{\alpha}$, for example to prove that pretame class-forcings preserve the Replacement (or in our case the Set-Bounding) Axiom. So we cannot simply transfer the results of \cite{Friedman:2000} but have to prove the Definability Lemma and the preservation of the axioms again without making use of the Power Set Axiom. 

To define definable class-forcing in SetMK$^*$ first note that the following still holds:
Let $M^+$ be a transitive model of SetMK$^*$, $P$ be a $M^+$-definable forcing notion and $G$ $P$-generic over $M^+$. Then $M^+[G]$  is transitive and $Ord(M^+[G])= Ord(M^+)$. It follows from the definition of the interpretation of names and the definition of $M^+[G]$ that if $y\in\sigma^G$ then $y=\tau^G$ for some $\tau\in TC(\sigma)$ and therefore $M^+[G]$ is transitive.
Furthermore for every $x\in Ord(M^+)$ there exists a name $\sigma$ for $x$ (i.e. $x=\sigma^G$ as defined above) with name-rank of $\sigma$ $=$ the least $\alpha\in Ord(M)$ such that $\sigma\in\mathcal{N}_{\alpha+1}$ and by induction the von Neumann rank of $\sigma^G$ is at most the name rank of $\sigma$. So we know that if ``new'' sets are added by the forcing they have size at most the ``old'' sets from $M^+$ and so $Ord(M^+[G])\subseteq Ord(M^+)$.

We will first treat the case where we already assume that the forcing relation is definable and $P$ is a  pretame class-forcing and then show how we can ensure that in general pretame class-forcings  preserve the axioms and the Definability Lemma holds.

\begin{prop}\label{forcingsetmk}
Let $M^+$ be a model of SetMK$^*$ and let $P$ be a pretame definable class-forcing over M$^+$ that does not change $\kappa$ and whose forcing relation is definable. Let $G\subseteq P$ be definable class-generic over M$^+$. Then $M^+[G]$ is a model of SetMK$^*$.
\end{prop}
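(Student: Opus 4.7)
The plan is to verify each axiom of SetMK$^*$ in $M^+[G]$, adapting the pretame class-forcing technology of \cite{Friedman:2000} to the power-set-free setting. Transitivity of $M^+[G]$ and $\operatorname{Ord}(M^+[G])=\operatorname{Ord}(M^+)$ have already been observed just before the statement, which gives Extensionality and Foundation at once. Pairing, Union, and Infinity follow by the usual explicit name constructions in $M^+$; Choice (every set well-orderable) is obtained by lifting a well-ordering of $\operatorname{dom}(\sigma)\in M^+$ for any $x=\sigma^G\in M^+[G]$. So the substantive axiom schemes to check are Comprehension and Set-Bounding, after which the two axioms concerning $\kappa$ must be addressed.

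For Comprehension, given $a=\sigma^G$ and a formula $\varphi(x,\bar\tau^G)$, I would first use the assumed definability of $\Vdash$ to form the class
\[
\rho_0=\{(\pi,p)\in\operatorname{dom}(\sigma)\times P: p\Vdash\varphi(\pi,\bar\tau)\}.
\]
A priori $\rho_0$ can be a proper class. To cut it down to a set-name, I apply pretameness to the $\operatorname{dom}(\sigma)$-indexed family of dense classes $E_\pi=\{p\in P: p\text{ decides }\varphi(\pi,\bar\tau)\}$: this yields, below a suitable $q_0\in G$, a condition $q\leq q_0$ in $G$ together with a set $\langle e_\pi:\pi\in\operatorname{dom}(\sigma)\rangle\in M^+$ of predense $e_\pi\subseteq E_\pi$ below $q$. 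Replacing $P$ by $\bigcup_\pi e_\pi$ in the definition of $\rho_0$ produces a set-name $\rho\in M^+$ (using Comprehension in $M^+$ applied to the now set-sized definition), whose interpretation under $G$ is exactly $\{x\in a: M^+[G]\models\varphi(x,\bar\tau^G)\}$ by genericity. Set-Bounding is handled by the same strategy: from $M^+[G]\models\forall x\in a\,\exists y\,\varphi(x,y,\bar\tau^G)$ one obtains dense classes $D_\pi$ of conditions forcing $\varphi(\pi,\mu,\bar\tau)$ for some $P$-name $\mu$; pretameness supplies a set-indexed predense refinement in $M^+$, and a single bounding name $\nu\in M^+$ is then assembled from the induced witness-names by Set-Bounding in the ground model, giving $\nu^G$ as the required bound in $M^+[G]$.

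The two axioms concerning $\kappa$ follow more easily: the hypothesis that $P$ does not change $\kappa$ directly preserves strong inaccessibility, so $\kappa$ remains the largest cardinal of $M^+[G]$; and for the injectivity axiom, any $x=\sigma^G$ satisfies $|x|^{M^+[G]}\leq|\operatorname{dom}(\sigma)|^{M^+}\leq\kappa$, so $x$ injects into $\kappa$ inside $M^+[G]$. The principal obstacle I expect is Set-Bounding: in ZFC one typically closes off witnessing names using the $V_\alpha$-hierarchy, but in SetMK$^*$ that hierarchy is not available. One must therefore rely entirely on pretameness as a substitute for rank-bounding, and on Set-Bounding in the ground model to assemble witnesses into a single name living inside $M^+$. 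Checking that this assembly really takes place within $M^+$, rather than only in the external universe, is the step that demands the most care.
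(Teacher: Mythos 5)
Your proposal is correct and follows essentially the same route as the paper: the standard pretame class-forcing arguments are cited for the routine axioms, and Set-Bounding is proved by turning the witnesses into dense classes indexed by a set, invoking pretameness to obtain a set-sized family of predense refinements below a condition in $G$, and then applying Set-Bounding in the ground model to collect the witnessing names into a single set $T$, from which the bounding name $\{\langle\tau,1\rangle:\tau\in T\}$ is formed. The only cosmetic difference is that the paper first forces $a$ to be in bijection with an ordinal $\alpha$ and indexes the dense classes by $x<\alpha$, whereas you index them directly by $\operatorname{dom}(\sigma)$; both are legitimate since either index set lies in $M^+$.
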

\begin{proof}
Extensionality, Pairing, Comprehension, Infinity, Foundation and Choice still hold by the proof for definable class-forcing over full ZFC. We have to show that Set-Bounding holds in $M^+[G]$, i.e. 
\begin{equation*}
M^+[G]\models\forall x\in a \,\exists y \,\varphi(x,y)\to \exists b\,\forall x\in a\,\exists y\in b\,\varphi(x,y)
\end{equation*}
Let $\sigma$ be a name for $a$. We can extend any $p$ for which $p\Vdash\forall x\in\sigma\,\exists y\,\varphi(x,y)$ to force that there is an isomorphism between $\sigma$ and an ordinal $\alpha$ (by using AC) and so we can assume without loss of generality that $\sigma$ is $\check{\alpha}$ where $\alpha\in Ord$ and therefore $p\Vdash\forall x<\alpha\,\exists y\,\varphi(x,y)$. Then for such a fixed $p$ and for each $x<\alpha$ we can define by the Definability of the forcing relation $D_x=\{ q\leq p\,\vert\,\exists\tau\, q\Vdash\varphi(x,\tau)\}$ where $D_x$ is dense below $p$. By pretameness there is a $q\leq p$ and $\langle d_x\,\vert\, x<\alpha\rangle\in M^+$ such that for all $x<\alpha$, $d_x$ is pretense $\leq q$ and by genericity there is such a $q$ in $G$. Then we know that for all pairs $\langle x,r\rangle$ where $x<\alpha$ and $r\in d_x$ there is $\tau$ such that $r\Vdash \varphi(x,\tau)$. By the Set-Bounding principle in $M^+$ we get a set $T\in M^+$ such that $\forall (x,r)$ with $r\in d_x$ $\exists\tau\in T$ such that $r\Vdash\varphi(x,\tau)$. Finally let $\pi$ be a name for $\{\tau^G\,\vert\,\tau\in T\}$, i.e. $\pi=\{\langle\tau, 1^{\mathbb{P}}\rangle\,\vert\,\tau\in T\}$. Then, because the generic below $q$ hits every $d_x$, $\varphi(x,\tau)$ will hold for some $\tau\in T$. It follows that $q\Vdash\forall x<\alpha\,\exists y\in \pi\,\varphi(x,y)$.
Then Union follows with the use of Set-Bounding.
\end{proof}

With this proposition we have shown that in a model of MK$^*$ we can force with a definable  hyperclass-forcing $\mathbb{P}$ and preserve MK$^*$, provided $\mathbb{P}$ translates to a pretame class-forcing in SetMK$^*$ 
which preserves the inaccessibility of $\kappa$ and whose forcing relation is definable. But in practice we don't usually know if the forcing relation is definable, even if we know that $P$ is pretame due to the absence of a suitable hierarchy (like the $V$-hierarchy which suffices when forcing over ZF-models). So we will introduce a preparatory forcing which does not add any new sets but converts the SetMK$^*$ model $M^+$ into a model of the form  $L_{\alpha}[A]$ for some generic class predicate $A\subseteq ORD$ preserving SetMK$^*$ (relative to $A$). This will allow us to use the relativized $L$ hierarchy and therefore adapt the proof of  the Definability Lemma for a pretame class-forcing and the fact that it preserves the axioms. Such a preparatory forcing presents us with two difficulties: first we have to show that its forcing relation is definable and the forcing is pretame, so that we can infer from Proposition \ref{forcingsetmk} that it preserves the axioms.  Secondly we have to show that the predicate $A$, that was added by the forcing, can be coded into a subset of $\kappa$ so as to avoid problems when going back to the MK$^*$ model.

To prove the pretameness of such a forcing we have to add a new axiom to SetMK$^*$, namely a variant of Dependent Choice. To ensure that this axiom holds in M$^+$, we will add its class version to MK$^*$ and show that it is transformed to the appropriate set version using the coding introduced in the last section.
\begin{defi}
Let MK$^{**}$ consist of the axioms of MK$^*$ plus Dependent Choice for Classes (we denote this with $DC_{\infty}$):
\begin{equation*}
\forall \vec{X}\exists Y \varphi(\vec{X},Y)\to\forall X\exists\vec{Z}\,(Z_0=X\wedge\forall i\in ORD\,\varphi(\vec{Z}\restriction i, Z_{i}))
\end{equation*}
 where $\vec{X}$ is an $\alpha$-length sequence of classes for some $\alpha\in ORD$, $\vec{Z}$ is an ORD-length sequence of classes and $Z\restriction i$ is the sequence of the ``previously chosen'' $Z_j$, $j<i$. 
 \end{defi}
 
 In the resulting SetMK$^{**}$ model M$^+$, $DC_{\infty}$ becomes a form of $\kappa$-Dependent Choice:
\begin{equation*}
\forall \vec{x}\,\exists y \varphi(\vec{x},y)\to\forall x\exists\vec{z}\,(z_0=x\wedge\forall i<\kappa\,\varphi(\vec{z}\restriction i, z_{i}))
\end{equation*}
 where $\vec{x}$ is a $<\kappa$-length sequence of sets, $\vec{z}$ is a $\kappa$-length sequences of sets and $z\restriction i$ is the sequence of the ``previously chosen'' $z_j$, $j<i$.

The coding between MK$^{**}$ and SetMK$^{**}$ works exactly as in the MK$^*$ case, we only have to prove that it transforms $DC_{\infty}$ into $DC_{\kappa}$ and vice versa.

\begin{prop}
\begin{enumerate}
\item Let $\mathcal{M}=(M,\mathcal{C})$ be a $\beta$-model of MK$^{**}$. Then we can define a model
\begin{align*}
M^+=\{x\,\vert\,\text{there is a coding pair }(M_x,R_{x})\text{ that codes }x\}
\end{align*}
Then $M^+$ is the unique, transitive set that obeys the following properties:
\begin{itemize}
\item[a)] $M^+\models\text{ SetMK}^{**}$,
\item[b)] $\mathcal{C}=P(M)\cap M^+$,
\item[c)] $M=V_{\kappa}^{M^+}$, $\kappa$ is the largest cardinal in $M^+$ and strongly inaccessible in $M^+$.
\end{itemize}

\item Let M$^+$ be a model of SetMK$^{**}$ that has a strongly inaccessible cardinal $\kappa$, let $\mathcal{C}= P(M)\cap M^+$ and $M= V^{M^+}_\kappa$. Then $\mathcal{M}=(M, \mathcal{C})$ is a model of MK$^{**}$.
\end{enumerate}
\end{prop}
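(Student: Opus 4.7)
The plan is to build on Theorems \ref{maintheorem} and \ref{conversemain}, which already establish the coding correspondence between $\beta$-models of MK$^*$ and transitive models of SetMK$^*$ satisfying properties (a)--(c). Since MK$^{**}$ adjoins the single axiom $DC_\infty$ to MK$^*$ and SetMK$^{**}$ correspondingly adjoins $DC_\kappa$ to SetMK$^*$, the proof of both directions reduces to verifying that the coding from Section \ref{codingsection} transforms $DC_\infty$ into $DC_\kappa$ and vice versa; all other axioms are handled verbatim by the previous two theorems.

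For the forward direction, fix a $\beta$-model $\mathcal{M}$ of MK$^{**}$ with associated $M^+$ given by Theorem \ref{maintheorem}, and suppose $M^+ \models \forall \vec{x}\,\exists y\,\psi(\vec{x}, y)$, where $\vec{x}$ ranges over $<\kappa$-length sequences of sets of $M^+$. By Lemma \ref{codingformula} I would translate $\psi$ into a second-order formula $\varphi$ on codes, so that the hypothesis becomes a statement in $\mathcal{M}$ asserting that for every class coding an $\alpha$-length ($\alpha \in ORD$) sequence of sets there is a class coding a set satisfying $\varphi$. Applying $DC_\infty$ in $\mathcal{M}$ yields an $ORD$-length class-sequence $\vec{Z}$; by Class-Comprehension this sequence can be combined into a single class $W \in \mathcal{C}$, and by Theorem \ref{maintheorem}(b) together with the coding machinery (in particular Coding Lemma \ref{codinglem2}), $W$ is realized in $M^+$ as a $\kappa$-length sequence of sets witnessing $DC_\kappa$.

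For the reverse direction, start with $\mathcal{M} \models \forall \vec{X}\,\exists Y\,\varphi(\vec{X}, Y)$, where $\vec{X}$ is an $\alpha$-length sequence of classes. Using the converse of Lemma \ref{codingformula} established in the proof of Theorem \ref{conversemain}, I would translate $\varphi$ into a first-order formula $\psi$ in $M^+$, recalling that $\alpha \in ORD$ in $\mathcal{M}$ corresponds to $\alpha < \kappa$ in $M^+$. $DC_\kappa$ in $M^+$ then produces a $\kappa$-length sequence $\vec{z}$ whose graph, after representing each $z_i$ by a coding pair in $\mathcal{M}$, is realized as a class in $\mathcal{C}$ by Theorem \ref{maintheorem}(b). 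Decomposing this class level-by-level via the coding machinery yields the desired $ORD$-length class-sequence $\vec{Z}$.

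The main subtlety is that sets in $M^+$ are not all subsets of $V_\kappa^{M^+}$; some have rank up to the height $\kappa^*$ of $M^+$, so the translation cannot naively identify classes with sets. Rather, each set of $M^+$ must be represented by a coding pair in $\mathcal{M}$, and one must verify that these representations combine coherently into $ORD$-length sequences of classes and decompose correctly back into $\kappa$-length sequences in $M^+$. The Class-Bounding Principle, Class-Comprehension, and Coding Lemmas \ref{codinglem1} and \ref{codinglem2} should suffice to handle this, paralleling their use in the proofs of Theorems \ref{maintheorem} and \ref{conversemain}.
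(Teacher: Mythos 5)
Your proposal is correct and follows essentially the same route as the paper: both reduce the statement to checking that the coding transforms $DC_\infty$ into $DC_\kappa$ and back, deferring everything else to Theorems \ref{maintheorem} and \ref{conversemain} (the paper handles the converse direction by analogy with the Comprehension case in Theorem \ref{conversemain}). If anything you are more careful than the paper's very brief argument, which loosely treats sets of $M^+$ as "elements of $\mathcal{M}$"; your explicit routing through coding pairs and Lemma \ref{codingformula} for sets of rank above $\kappa$ is the right way to make that step precise.
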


\begin{proof}

For 1.: Using the proof of Theorem \ref{maintheorem} it only remains to show that $M^+$ is a model of $\kappa$-Dependent Choice, where $\kappa$ is strongly inaccessible in $M^+$: $M^+\models \forall\, \vec{x}\, \exists y\,\varphi(\vec{x}, y)\to \forall x \exists\,\vec{z}(z_0=x\wedge\forall i<\kappa\,\varphi(\vec{z}\restriction i, z_i))$ where $\vec{x}, \vec{z}$ are $\kappa$-length sequences.  So assume that $M^+\models \forall\, \vec{x}\, \exists y\,\varphi(\vec{x}, y)$. From what we have show above, we know that $\vec{x}$ is an ordinal length sequence of elements in $\mathcal{M}$ and also $y$ is an element of $\mathcal{M}$ (as these can be classes we will write them with upper case letters in $\mathcal{M}$). Let $\psi$ be the second-order formula associated to $\varphi$, i.e. $\psi$ is the formula that says exactly the same as $\varphi$ only that its variables can be classes. Then by DC$_{\infty}$ we have that $\forall \vec{X}\exists Y \psi(\vec{X},Y)\to\forall X\exists\vec{Z}\,(Z_0=X\wedge\forall i\in ORD\,\psi(\vec{Z}\restriction i, Z_{i}))$
 where $\vec{X}, \vec{Z}$ are sequences of classes with ordinal length and $Z\restriction i$ is the sequence of the previously ``chosen'' $Z_j$, $j<i$. As before all the classes mentioned here are elements of $M^+$ where $\vec{Z}$ is a $\kappa$-length sequence and so we have proven the $\kappa$-Dependent Choice.

For 2.: Again we only have to proof the case of DC$_{\infty}$ and this is an direct analog to the proof of the Comprehension Axiom in the proof of Proposition \ref{conversemain}.

\end{proof}

\begin{lem}\label{distributive}
Let $M^+$ be a model of SetMK$^{**}$ with largest cardinal $\kappa$ and $P$ be an $M^+$-definable  class forcing notion. Then if $P$ is $\leq\kappa$-closed it is $\leq\kappa$-distributive.
\end{lem}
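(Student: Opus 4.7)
The plan is to combine $\leq\kappa$-closure with the $\kappa$-Dependent Choice principle available in $M^+$ (since $M^+\models$ SetMK$^{**}$) to carry out the standard recursive construction showing that closure implies distributivity. Concretely, I fix $p\in P$ and an $M^+$-definable sequence $\langle D_i : i\leq\kappa\rangle$ of dense open subclasses of $P$, and aim to produce $q\leq p$ with $q\in\bigcap_{i\leq\kappa} D_i$.

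First I would build a $\leq$-descending sequence $\langle p_i : i\leq\kappa\rangle$ of conditions below $p$ with $p_i\in D_i$ for every $i$. At successor stages this is supplied by density of $D_{i+1}$ below $p_i$; at limit stages $\lambda\leq\kappa$ the $\leq\kappa$-closure of $P$ produces a lower bound of $\langle p_j : j<\lambda\rangle$, which may then be refined into $D_\lambda$ by density. Each stage witnesses an instance of $\forall\vec{z}\,\exists y\,\varphi(\vec{z},y)$, where $\varphi(\vec{z},y)$ asserts that $y\in P$, $y\leq z_j$ for all $j\in\mathrm{dom}(\vec{z})$, and $y\in D_{\mathrm{dom}(\vec{z})}$. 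Applying DC$_\kappa$ to $\varphi$, starting from $p$, then produces the entire sequence $\langle p_i : i\leq\kappa\rangle$ as a set of $M^+$. One further application of $\leq\kappa$-closure to this descending sequence of length $\leq\kappa$ yields a single $q\in P$ with $q\leq p_i$ for every $i\leq\kappa$; since each $D_i$ is open and $p_i\in D_i$, this gives $q\in D_i$ for every $i$, as required.

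The main obstacle I anticipate is verifying that the set-level DC$_\kappa$ genuinely applies to the above $\varphi$ even though the $D_i$ are proper classes of $M^+$. The point is that the sequence $\langle D_i : i\leq\kappa\rangle$ is $M^+$-definable, so ``$y\in D_j$'' is expressible by a single formula with $j$ and the defining parameter as arguments, and hence $\varphi$ is a legitimate formula of the SetMK$^{**}$ language to which DC$_\kappa$ directly applies. A secondary concern is that the $\kappa$-length sequence of conditions must be an element of $M^+$ in order for the final closure step to be invoked, but this is automatic from the output clause of DC$_\kappa$, which delivers the sequence as a set.
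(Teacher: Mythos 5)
Your proposal is correct and follows essentially the same route as the paper: both arguments instantiate the set-level $\kappa$-Dependent Choice of SetMK$^{**}$ with the formula $\varphi(\vec{z},y)$ saying that $y$ is a lower bound in $P$ for the descending sequence $\vec{z}$ lying in $D_{\mathrm{dom}(\vec{z})}$, use $\leq\kappa$-closure to verify the hypothesis at limit stages, and apply closure once more to the resulting $\kappa$-length sequence to obtain the condition $q$ meeting every $D_i$. Your explicit remarks on why DC$_{\kappa}$ applies despite the $D_i$ being proper classes (definability of the sequence) and on the sequence being an element of $M^+$ are sound and merely make precise what the paper leaves implicit.
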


\begin{proof}
Let $p\in P$ and $\langle D_i\,\vert\, i<\beta\rangle$ is an $M^+$ definable sequence of dense classes, $\beta\leq\kappa$, and we want to show that there is a $q\leq p$ meeting each $D_i$ ($q$ meets $D_i$ if $q\leq q_i\in D_i$ for some $q_i$). As we have shown that $P$ is $\leq\kappa$-closed we want to construct a descending sequence $p_0\geq p_1\geq\ldots\geq p_i\geq\ldots$ $(i<\beta)$ with $p_i\in D_i$ for all $i<\beta$. Here we need the SetMK$^{**}$ version of the Dependent Choice Axiom we added to MK$^*$: Recall that $\kappa$-Dependent Choice says that $\forall \vec{x}\,\exists y \varphi(\vec{x},y)\to\forall x\exists\vec{z}\,(z_0=x\wedge\forall i<\kappa\,\varphi(\vec{z}\restriction i, z_{i}))$ where $\vec{x}$ is a $<\kappa$-length sequence of sets, $\vec{z}$ is a $\kappa$-length sequences of sets and $z\restriction i$ is the sequence of the previously ``chosen'' $z_j$, $j<i$. If we take $\varphi(\vec{x},y)$ to mean that ``$\vec{x}$ is a descending sequence of conditions, $x_i\in D_i$ for $i<\,\text{length}\,\vec{x}$, $y$ is a lower bound for $\vec{x}$ and $y\in D_{length_{\vec{x}}}$'' then we know that we can find a descending sequence $p_0\geq p_1\geq\ldots\geq p_i\geq\ldots$ $(i<\beta)$ with $p_i\in D_i$ for all $i<\beta$ such that there is an $q\in P$ with $q\leq p$ and $q\leq p_i$ for all $i<\beta$ and so $q$ meets all $D_i$. 
\end{proof}

\begin{theo}\label{prepforcing}
Let $M^+$ be a model of SetMK$^{**}$ with largest cardinal $\kappa$ and let $\kappa^*$ denote the height of M$^+$. Then there is an $M^+$-definable forcing $P$ such that the Definability Lemma holds and $P$ is  pretame, which adds a class predicate $A\subseteq\kappa^*$ such that $M^+=L_{\kappa^*}[A]$ and $(M^+, A)\models SetMK^{**}$ relativized to $A$.
\end{theo}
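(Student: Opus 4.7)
The plan is to define $P$ as a class forcing over $M^+$ whose generic adds a subclass $A\subseteq\kappa^*$ encoding a well-ordering of $M^+$, arranged so that $A$ recovers $M^+$ inside $L_{\kappa^*}[A]$. By global choice in $M^+$ (available from SetMK$^{**}$ together with the axiom that every set injects into $\kappa$), fix an enumeration $\langle x_\alpha : \alpha<\kappa^*\rangle$ of $M^+$. Take the conditions of $P$ to be bounded partial functions $p\colon\beta\to 2$ with $\beta<\kappa^*$, subject to coding constraints that arrange, in the style of the almost-disjoint coding used in \cite{Friedman:2000}, for the bits of $p$ on a canonical interval around each $\alpha<\beta$ to encode enough information to recover $x_\alpha$ in $L_{\kappa^*}[A]$. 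Conditions are ordered by end-extension, and the generic yields $A=\bigcup G$.

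The first step is to verify $\leq\kappa$-closure. A descending sequence $\langle p_i:i<\lambda\rangle$ of length $\lambda\leq\kappa$ has union $p=\bigcup_i p_i$; its domain $\sup_i\beta_i$ is a supremum of at most $\kappa$ ordinals below $\kappa^*$ and hence below $\kappa^*$ in $M^+$ (since any such sequence forms a set in $M^+$, and $\kappa^*=\mathrm{ORD}(M^+)$); the coding constraints cohere along the chain by construction. Applying Lemma~\ref{distributive}, $P$ is $\leq\kappa$-distributive, so no new subsets of $\kappa$ are added and hence no new sets at all. Consequently $\kappa$ retains its strong inaccessibility and the set-part of $M^+[G]$ is exactly $M^+$, with only the class $A$ added.

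Next I would establish pretameness and the Definability Lemma. For pretameness, given an $M^+$-definable sequence $\langle D_i:i<a\rangle$ of dense classes with $a\in M^+$ and a condition $p$, apply Set-Bounding in $M^+$ to produce uniformly small predense sets $d_i\subseteq D_i$ below $p$, and then use $\leq\kappa$-distributivity (obtained from $\leq\kappa$-closure via Lemma~\ref{distributive}) to meet each $d_i$ by a single condition $q\leq p$. For the Definability Lemma, I would induct on name-rank as in \cite{Friedman:2000}, replacing appeals to the $V$-hierarchy with the observation that every condition is a set of $M^+$, so the recursive clauses at each stage are set-sized and computable inside $M^+$. Proposition~\ref{forcingsetmk} then delivers that $(M^+,A)\models\mathrm{SetMK}^{**}$ relativized to $A$.

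Finally, the equality $M^+=L_{\kappa^*}[A]$ is checked as follows: the coding arrangement makes each $x_\alpha$ uniformly recoverable from $A$, hence $M^+\subseteq L_{\kappa^*}[A]$; the reverse inclusion holds because $A\subseteq\kappa^*\in M^+$ and $L_{\kappa^*}[A]$ is built by set-sized operations inside the set-part of $M^+[G]=M^+$. The main obstacle will be designing the coding constraints so that $P$ is simultaneously $\leq\kappa$-closed, pretame, and ``self-decoding'' enough that $A$ recovers all of $M^+$ in $L_{\kappa^*}[A]$; this is the delicate balancing act familiar from Jensen-style coding, which here must be adapted to the ZFC$^-$ setting by systematically replacing uses of the $V$-hierarchy with the $L[A]$-hierarchy and by invoking Lemma~\ref{distributive} in lieu of $<\kappa^*$-closure arguments that would require Power Set.
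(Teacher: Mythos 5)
There is a genuine gap, and it sits at the very first step. You ``fix an enumeration $\langle x_\alpha:\alpha<\kappa^*\rangle$ of $M^+$'' by appeal to global choice, but SetMK$^{**}$ provides no definable well-ordering of the whole universe $M^+$: it gives AC for sets and the fact that every set injects into $\kappa$, which well-orders each set but not the proper class $P(\kappa)\cap M^+$ of codes. Producing such a global well-order is precisely what Theorem \ref{prepforcing} is for --- once $M^+=L_{\kappa^*}[A]$ one reads off the canonical well-order of $L[A]$, and the paper uses this only \emph{after} the forcing (e.g.\ to verify DC$_\kappa$ in $(M^+,A)$). So your construction is circular: the forcing $P$ must be $M^+$-definable, yet your conditions are defined from an enumeration that $M^+$ need not be able to define. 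A second, related gap is that the ``coding constraints'' on conditions are never specified; you defer them to ``the delicate balancing act familiar from Jensen-style coding.'' But that is exactly the part that would do the work, and constraints of almost-disjoint type are in general not closed under unions of chains without a reshaping hypothesis --- which is why the paper postpones almost-disjoint coding and reshaping to Theorem \ref{Jensencoding} and keeps the present forcing elementary.

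The paper's proof avoids both problems by taking $P=\{p:\beta\to 2\mid\beta<\kappa^*,\ p\in M^+\}$ with \emph{no} constraints: $\leq\kappa$-closure is then immediate, and the equation $M^+=L_{\kappa^*}[A]$ comes from a density argument (for each set of ordinals $s\in M^+$ it is dense to copy $s$ into an interval of the generic), which requires no prior enumeration of $M^+$. Two smaller points. Your Definability Lemma sketch (``the recursive clauses at each stage are set-sized'') does not address the real issue, namely that the recursion for $p\Vdash\sigma\in\tau$ quantifies over the proper class of conditions; the paper's mechanism is that, since $P$ adds no new sets, any condition of length exceeding the ranks of $\sigma$ and $\tau$ already decides the atomic statement, so $p\Vdash\sigma\in\tau$ reduces to the first-order condition $\forall q\leq p\,(\vert q\vert>\mathrm{rank}\,\sigma,\mathrm{rank}\,\tau\to\sigma^q\in\tau^q)$. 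And for pretameness the order of your argument is inverted: one first gets $\leq\kappa$-distributivity from closure via Lemma \ref{distributive} (this is where DC$_\kappa$ is used), then observes that every index set injects into $\kappa$ by clause 3 of SetMK$^{**}$, so distributivity already yields pretameness; Set-Bounding is not what produces the $d_i$.
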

\begin{proof}
Let $P=\{\, p:\beta\to 2\,\vert\,\beta<\kappa^*,\, p\in M^+\}$ and let $G$ be $P$-generic over $M^+$. Let $\bigcup G=g:\kappa^*\to 2$ and $A=\{\gamma<\kappa^*\,\vert\, g(\gamma)=1\}$. Note that $G$ is an amenable predicate, i.e. $G\cap a$ belongs to $M^+$ for every $a\in M^+$ and $P$ is $\leq\kappa$-closed, as for every $\lambda\leq\kappa$ and every descending sequence $p_0\geq p_1\geq\ldots\geq p_i\geq\ldots$ $(i<\lambda)$ there is $q=\bigcup_{i<\lambda} p_i\in P$ such that $\forall i<\lambda\,\,q\leq p_i$.

To show that the forcing relation is definable in the ground model, we will concentrate on the atomic cases ``$p\Vdash\sigma\in\tau$'' and ``$p\Vdash\sigma=\tau$''. Then the other cases follow by induction. For $p\Vdash\sigma\in\tau$ first consider the case where the length of $p$ is larger then the ranks of $\sigma$ and $\tau$ (i.e. there is an $\gamma$ such that $rank\,\sigma, rank\,\tau<\gamma$ and $Dom(p)>\gamma$). Then the question if $\sigma^G\in \tau^G$ is already decided by $p$, meaning that $\sigma^G\in \tau^G$ exactly when $\sigma^p\in \tau^p$ with $\tau^p=\{\pi^p\,\vert\,\langle \pi, q\rangle\in\tau, p\leq q\}$ as $p$ ``has no holes'' and therefore a condition that extends $p$ will never change the decisions made below the length of $p$. This now defines the forcing relation because $P$ doesn't add any new sets and therefore $\sigma^p$ and $\tau^p$ are already elements of the ground model. If $p$ is not large enough to decide if $\sigma^G$ is an element of $\tau^G$, then we have to check that every $q$ that extends $p$ decides that this is the case so we get the definition ``$p\Vdash\sigma\in\tau\leftrightarrow\forall q\leq p\,(\,\vert q\vert\,>rank\,\sigma, rank\,\tau\to \sigma^q\in\tau^q)$''. The definitions for the ``$=$'' case can be given the same way and so the forcing is definable. The Truth Lemma then follows from Definability by the usual arguments.

Next we want to show that $P$ is pretame: As $P$ is $\leq\kappa$-closed, we know by Lemma \ref{distributive} that $P$ is $\leq\kappa$-distributive. Then $P$ is also pretame for sequences of dense classes of length $\leq\kappa$ and therefore $P$ is pretame. 

We have shown that $P$ doesn't add any new sets to the extension but a subclass $A\subset\kappa^*$. So the forcing just reorganizes $M^+$ and adds $A$ as a predicate. Then every set of ordinals from $M^+$ is copied into an interval of the generic and so every set of ordinals and therefore also every set is coded by $A$. Also as $A$ adds no new sets it holds that $L_{\kappa^*}[A]\subseteq M^+$. It follows that $M^+[G]=L_{\kappa^*}[A]$ and therefore already $M^+=L_{\kappa^*}[A]$. 

It remains to show that  $(M^+, A)\models (SetMK^{**})^A$, i.e. $SetMK^{**}$ holds for formulas which can mention $A$ as a predicate. As $P$ preserves the strongly inaccessibility of $\kappa$ it follows by Proposition \ref{forcingsetmk} that $M^+[G]\models SetMK^{*}$ and that means that $(M^+, A)\models SetMK^{*}$. But as the Comprehension and Bounding can mention the generic this implies that $(M^+, A)\models (SetMK^{*})^A$. For the DC$_{\kappa}$ note that by adding $A$ we now have a global well-order of the extension. That means that if we have a $<-\kappa$ sequence $\vec{x}$ in $M^+[G]$ such that $\forall \vec{x}\,\exists y\,\varphi(\vec{x}, y)$ and we want to find a $\kappa$-length sequence $\vec{z}$ such that $\forall x\exists\vec{z}\,(z_0=x\wedge\forall i<\kappa\,\varphi(\vec{z}\restriction i, z_{i}))$ we can just take $z_i$ to be least so that $\varphi(\vec{z}\restriction i, z_i)$ for each $i$.
\end{proof}
 
 As our ultimate goal is to go back to an MK$^{**}$ model, we want to show that the predicate $A$ can be coded into a subset of $\kappa$:
\begin{theo} \label{Jensencoding}
Let $(M^+, A)$ be a model of SetMK$^{**}$ relativized to a predicate $A$, with largest cardinal $\kappa$ and let $\kappa^*$ denote the height of $(M^+, A)$, where $A$ is the generic predicate added by the forcing $P$ in Theorem \ref{prepforcing} and $M^+=L_{\kappa^*}[A]$. Then we can force that there is a $X\subseteq \kappa$ such that $L_{\kappa^*}[A]\subseteq L_{\kappa^*}[X]$, SetMK$^{**}$ is preserved and $\kappa$ remains strongly inaccessible.
\end{theo}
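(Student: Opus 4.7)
The plan is to adapt the Jensen coding technique from \cite{beller1982coding}, in its class-forcing formulation developed in \cite{Friedman:2000}, to our SetMK$^{**}$ setting, coding the class $A\subseteq\kappa^*$ into a single subset $X\subseteq\kappa$. Concretely, I would define a class forcing $Q$, definable over $(M^+,A)=(L_{\kappa^*}[A],A)$, whose generic produces such an $X$; the construction is a reverse Easton iteration of almost-disjoint codings indexed by the regular cardinals of $L_{\kappa^*}[A]$ lying in the interval $[\kappa,\kappa^*)$. At each such $\lambda$, one picks, using the fine structure of $L_{\kappa^*}[A]$, a canonical sequence $\langle b^{\lambda}_{\alpha}:\alpha<\lambda^+\rangle$ of almost-disjoint subsets of $\lambda$ of cardinality $<\lambda$; having already coded the information above $\lambda^+$ as some $B_{\lambda^+}\subseteq\lambda^+$, the $\lambda$-stage adds $Y_{\lambda}\subseteq\lambda$ with $\alpha\in B_{\lambda^+}$ iff $|Y_{\lambda}\cap b^{\lambda}_{\alpha}|<\lambda$. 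The desired $X$ is the value of $Y_{\lambda}$ at $\lambda=\kappa$, and the almost-disjoint families are chosen so that $X$ alone allows one to decode each $B_{\lambda}$, and hence $A$, level by level inside $L_{\kappa^*}[X]$, witnessing $L_{\kappa^*}[A]\subseteq L_{\kappa^*}[X]$.

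There are four things to verify about $Q$ as a definable class forcing over $(M^+,A)$. First, the Definability Lemma: each stage is an almost-disjoint forcing whose forcing relation is definable over the $L[A]$-hierarchy, and the reverse Easton iteration then inherits a definable forcing relation. Second, pretameness: each stage is sufficiently closed (in particular $\leq\lambda$-closed at stage $\lambda$), so by Lemma \ref{distributive} and $DC_{\kappa}$ each stage is $\leq\lambda$-distributive, and the usual reverse-Easton bookkeeping then yields pretameness of the full iteration. Third, $\kappa$ remains strongly inaccessible: the above-$\kappa$ part of the iteration adds no bounded subset of $\kappa$; the $\kappa$-stage has the $\kappa$-chain condition (via the canonical almost-disjoint family) so no cardinals $\leq\kappa$ are collapsed; and cardinal arithmetic below $\kappa$ is unchanged because the set-sized initial segments of the iteration are contained in $M^+$. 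Fourth, SetMK$^{**}$-preservation follows from Proposition \ref{forcingsetmk} applied to $Q$, using that $\kappa$ is preserved strongly inaccessible.

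The main obstacle is coping with the absence of the Power Set axiom. The Beller--Jensen argument uses $H(\lambda^+)$ and the fine structure of $L[A]$ freely, but in our setting $M^+=L_{\kappa^*}[A]$ has no power sets above $\kappa$. This is precisely where Theorem \ref{prepforcing} earns its keep: it places us in a ground model of the shape $L_{\kappa^*}[A]$ on which the fine-structural tools of \cite{Friedman:2000} apply directly, without appeal to $H(\lambda^+)$. The delicate technical point inside the construction is the simultaneous choice of the families $\langle b^{\lambda}_{\alpha}\rangle$ so that they are at once $L_{\kappa^*}[A]$-definable, compatible with $\leq\lambda$-distributivity at stage $\lambda$, and reconstructible from $X$ alone; once these are arranged, the remaining ingredients (reverse-Easton support, verification of the chain condition at $\kappa$, and the decoding of $A$ from $X$) proceed as in the standard treatment.
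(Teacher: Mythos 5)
There is a genuine gap, and it starts with the cardinal structure of the ground model. In $M^+$ the cardinal $\kappa$ is the \emph{largest} cardinal (axiom 3 of SetMK$^*$: every set injects into $\kappa$), so the interval $[\kappa,\kappa^*)$ contains no regular cardinals other than $\kappa$ itself. Your reverse Easton iteration ``indexed by the regular cardinals of $L_{\kappa^*}[A]$ in $[\kappa,\kappa^*)$'', with intermediate predicates $B_{\lambda^+}\subseteq\lambda^+$ passing information down cardinal by cardinal, therefore has nothing to iterate over: there is no $\kappa^+$ and no hierarchy of $H(\lambda^+)$'s to descend through. The entire coding has to be done in a \emph{single} almost-disjoint step, taking the class-sized (from the point of view of $\mathcal{M}$) predicate $A\subseteq\kappa^*$ directly to a subset of $\kappa$, via a family $\langle A_\beta\mid\beta<\kappa^*\rangle$ of almost-disjoint subsets of $\kappa$ indexed by \emph{all} ordinals below $\kappa^*$.

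This is where the second, and more serious, omission appears: to define $A_\beta$ as the least subset of $\kappa$ in $L_{\kappa^*}[A\cap\beta]$ distinct from the earlier $A_{\bar\beta}$, one must be able to enumerate $\{A_{\bar\beta}\mid\bar\beta<\beta\}$ in order type $\leq\kappa$ \emph{inside} $L_{\kappa^*}[A\cap\beta]$, i.e.\ one needs $\vert\beta\vert\leq\kappa$ in $L_{\kappa^*}[A\cap\beta]$ for every $\beta<\kappa^*$. The fine structure of $L[A]$ does not give you this; it is a genuine forcing step. The paper's proof therefore proceeds in two stages: first a \emph{reshaping} forcing $P=\{p:\beta\to 2\mid\kappa\leq\beta<\kappa^*,\ \forall\gamma\leq\beta\,(L_{\kappa^*}[A\cap\gamma,p\restriction\gamma]\models\vert\gamma\vert\leq\kappa)\}$, whose definable distributivity (Claim \ref{distributiveP}) is the real technical heart of the argument --- it requires extending conditions through $\Sigma_n$-elementary substructures of $L_{\kappa^*}[A]$ and handling limit stages where the length of the condition must be seen to collapse definably --- and only then a single almost-disjoint coding $Q$ of the reshaped predicate $A'$ into $X\subseteq\kappa$. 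For $Q$, pretameness comes not from closure but from the observation that any definable antichain injects into the set ${}^{<\kappa}2$ of first components (conditions sharing a first component are compatible), so every definable antichain is a set; your appeal to a ``$\kappa$-chain condition'' is also the wrong notion here since $\kappa^+$ does not exist in $M^+$. Without the reshaping step your construction cannot even get started, so the proposal as written does not establish the theorem.
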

\begin{proof}
To get $A$ definable in $M^+[X]$, for some $X\subseteq\kappa$, we want to use an almost disjoint forcing which codes the predicate $A$ into such an $X$. The forcing will be along the following lines: we will need to define a family $S$ of almost disjoint sets (i.e. for $x, y\subseteq \kappa$, $x$ and $y$ are almost disjoint if $x\cap y$ is bounded in $\kappa$) $A_{\beta}$ which we will use to code the predicate $A\subseteq\kappa^*$ into an $X$. We will define $A_{\beta}$ to be the least subset of $\kappa$ (i.e. least in the canonical well-order of $L_{\kappa^*}[A\cap \beta]$) in $L_{\kappa^*}[A\cap \beta]$ which is distinct from the $A_{\bar{\beta}}$ for $\bar{\beta}<\beta$. The idea is that we can decode $A$ in $L_{\kappa^*}[X]$ if we know the $A_{\beta}$'s. But as $A$ is a proper class we don't know that we can always find such distinct $A_{\beta}$'s. So we will have to assume that the cardinality of $\beta$ is at most $\kappa$ not only in $L_{\kappa^*}[A]$ but also in $L_{\kappa^*}[A\cap\beta]$ because now to find an $A_{\beta}$ distinct from each $A_{\bar{\beta}}$, $\bar{\beta}<\beta$, we can list these $A_{\bar{\beta}}$'s as $\langle A_i\,\vert\, i<\kappa\rangle$ and obtain $A_{\beta}$ by diagonalization. To fulfill that assumption however we have to ``reshape'' $A$ into a predicate $A'$ that has the property that if $\beta<\kappa^*$ then the cardinality of $\beta$ is $\leq\kappa$ in $L_{\kappa^*}[A'\cap \beta]$. Then we can code $A$ as the even part of $A'$ to get $(M^+, A')\models (SetMK^{**})^{A'}$ and finally code $A'$ by a subset of $\kappa$.

So the proof consists of two steps: First we have to show that we can reshape $A$ and then we have to force with an almost disjoint forcing to show that the reshaped predicate $A'$ can be coded into a subset of $\kappa$, preserving SetMK$^{**}$ in each step.

Step 1: We add a reshaped predicate $A'$ over $(L_{\kappa^*}[A], A)$ by the following forcing:
\begin{equation*}
P=\{p:\beta\to 2\,\vert\, \kappa\leq\beta<\kappa^*, \forall\,\gamma\leq\beta\, (L_{\kappa^*}[A\cap\gamma, p\restriction\gamma]\vDash\text \,\vert\,\gamma\,\vert \leq\kappa)\}
\end{equation*}
The main obstacle is to show that $P$ is definably-distributive, i.e. we have to show that for a $p\in P$ and $(M^+, A)$-definable sequences of dense classes of set-length $\langle D_i\,\vert\, i<\alpha\rangle$ for all $\alpha\leq\kappa$, there is a $q\leq p$ meeting each $D_i$ with $q\in P$.
\begin{claim} \label{distributiveP}
$P$ is definably-distributive. 
\end{claim}
\begin{proof}
Note that it suffices to show definable-distributivity for $\kappa$; so we consider an $(M^+, A)$-definable sequence of dense classes $\langle D_i\,\vert\, i<\kappa\rangle$. We want to define a descending sequence of conditions $p\geq p_0\geq p_1\geq\ldots$ where $p_i\geq q$, $q\in P$ and $p_{i+1}\in D_i$ for each $i <\kappa$. To show that the $p_i$ are indeed conditions we have to show that $L_{\kappa^*}[A\cap\gamma, p_i\restriction\gamma]\models\,\vert\gamma\vert\leq\kappa$ for every $\gamma\leq\vert p_i\vert$. In the following we will use the fact that a condition is always extendible to any length $<\kappa^*$:
$\forall p\,\forall\beta<\kappa^*\,\exists q\leq p, \vert q\vert\geq\beta, q\in P$. This holds because there is an $x\subseteq\kappa$ such that $\beta$ is coded by $x$ and $p*x\in P$ and has length $\vert p\vert +\kappa$. If this is still below $\beta$ we can lengthen $p$ further by a sequence of $0$'s: $q=p*x*\vec{0}$. This will again be an element of $P$ as we know from the information in the code $x$ of $\beta$ that the ordinals will collapse.

First, we assume that the sequence of dense classes is $\Sigma_1$-definable, i.e. $\{(q, i)\,\vert\, q\in D_i\}$ is $\Sigma_1$-definable with parameter.  

As we have seen that every condition is extendible, we can extend $p$ to catch a parameter $x\in L_{\vert p\vert}[A]$ such that the sequence of the $D_i$ is $\Sigma_1$-definable with parameter $x$. Let $p_0$ be this extension of $p$. Then, as we have Global Choice, we can consider the $<_{(M^+, A)}$-least pair $(q_0, w_0)$ such that $q_0\leq p_0$ and $w_0$ witnesses ``$q_0\in D_0$''. 
Then we choose $p_1$ such that $p_1$ is a condition which extends $q_0$ such that $w_0\in L_{\vert p_1\vert}[A\cap\vert p_1\vert]$. Now we define $p_2$ in the same way: Choose $(q_1, w_1)$ such that $q_1\leq p_1$ and $w_1$ witnesses ``$q_1\in D_1$''. Then let $p_2\leq q_1$ such that $w_1\in L_{\vert p_2\vert}[A\cap\vert p_2\vert]$. Define the rest of the successor cases $(p_{n+1}, w_{n+1})$ similarly.

For the first of the limit cases, let $p_{\omega}=\bigcup_{n<\omega} p_n$ and we claim that $p_{\omega}\in P$. So we have to show that $\forall \gamma\leq\vert p_{\omega}\vert$, $\gamma$ collapses to $\kappa$ using only $A\cap\gamma$ and $p_{\omega}\restriction\gamma$. We know that if $\gamma<\vert p_{\omega}\vert$ then $\gamma<\vert p_n\vert$ for some $n$. So we only have to consider the case where $\gamma=\vert p_{\omega}\vert$. It follows from the construction of the $p_n$'s that the sequence $\langle p_n\,\vert\, n<\omega\rangle$ is definable over $L_{\vert p_{\omega}\vert}[A\cap\vert p_{\omega}\vert, p_{\omega}]$ and is a cofinal sequence in $p_{\omega}$, i.e. it converges to $p_{\omega}$. Then also the sequence of the lengths of the $p_n$'s, $\langle\,\vert p_n\vert \,\vert\, n<\omega\rangle$ is definable over $L_{\vert p_{\omega}\vert} [A\cap\vert p_{\omega}\vert, p_{\omega}]$ and converges to $\vert p_{\omega}\vert$. As we know that $\vert p_n\vert$ collapses to $\kappa$ for every $n<\omega$, we know that in $L_{\vert p_{\omega}\vert}[A\cap\vert p_{\omega}\vert, p_{\omega}]$ $\vert p_{\omega}\vert$ definably collapses to $\kappa$. So $L_{\vert p_{\omega}\vert +1}[A\cap\vert p_{\omega}\vert, p_{\omega}]\models \vert p_{\omega}\vert$ is collapsed to $\kappa$. The other limit cases can be handled in the same way.

Now we go to the $\Sigma_2$-definable case. Note that we cannot simply copy the construction of the $p_n$-sequence because the witness $q_{n+1}$ we need for the definition of the next $p_{n+1}$ will now be a solution to a $\Pi_1$-statement and will therefore not be absolute in the other models. But we know that for $V=L_{\kappa^*}[A]$ it holds that $\forall\alpha<\kappa^*$ $\exists\beta\leq\kappa^*$, $\alpha<\beta$ such that $L_{\beta}[A]$ is $\Sigma_n$-elementary in $L_{\kappa^*}[A]$. This holds because for a pair $\alpha, n$ we can take the $\Sigma_n$-Skolem Hull $N$ of $\alpha$ in $L_{\kappa^*}[A]$. Then in $M$ we have a solution for every $\Sigma_n$-property with parameters $<\alpha$, $M$ is transitive and bounded by Class-Bounding. Then there is a $\beta\leq\kappa^*$ such that $M$ is equal to $L_{\beta}[A]$.

So we can always find models that are $\Sigma_1$-elementary submodels of $(M^+, A)$ in which we can carry out the definition of the sequence of conditions: As before we choose for every $n<\omega$ a pair $(q_{n}, w_{n})$ such that $q_{n}\leq p_{n}$ such that $w_{n}$ witnesses ``$q_{n}\in D_{n}$'' and then let $p_{n+1}\leq q_{n}$ such that $w_{n}\in L_{\vert p_{n+1}\vert}[A\cap\vert p_{n+1}\vert, p_{n+1}]$ and $L_{\vert p_{n+1}\vert}[A\cap\vert p_{n+1}\vert, p_{n+1}]$ is an $\Sigma_1$-elementary submodel of $L_{\kappa^*}[A]$. This also holds in the limit case by using the same construction we did for the $\Sigma_1$ case where again the model $L_{\vert p_{\omega}\vert+1}[A\cap\,\vert p_{\omega}\vert, p_{\omega}]$ is an $\Sigma_1$-elementary submodel of $L_{\kappa^*}[A]$. The same can be done for all the $\Sigma_m$-definable cases.
\end{proof}

Now that we know that $P$ is $\leq\kappa$-distributive, we know that $P$ is $\leq\kappa$-pretame and therefore $(M^+, A, A')\models (SetMK^{**})^{A,A'}$ (similar to proof of Theorem \ref{prepforcing} by using Proposition \ref{forcingsetmk} and the fact that there is a global well-order of the extension). Then we can code $A$ to be the even part of $A'$ and we get a model $(M^+, A')\models (SetMK^{**})^{A'}$. It remains to show that $A'$ can be coded into a subset of $\kappa$.

Step 2: Code $A'$ into $X\subseteq \kappa$. As we know that $A'$ is reshaped we can define a collection of sets $\mathcal{S}=\langle A_\beta\,\vert\,\beta<\kappa^*\rangle$ in the following way: let $A_{\beta}$ be the least $B\subseteq\kappa$ in $L_{\kappa^*}[A'\cap\beta]$ such that $B\notin\{A_{\bar{\beta}}\,\vert\,\bar{\beta}<\beta\}$. $\mathcal{S}$ can be turned into a collection $\mathcal{S}'=\langle A'_\beta\,\vert\,\beta<\kappa^*\rangle$ of almost disjoint sets $A'_{\beta}$ by mapping every set to the set of codes of its proper initial segments: $B\subseteq\kappa$ is mapped to $B'=\{\text{Code}\,(B\cap\alpha)\,\vert\,\alpha<\kappa\}\subseteq\kappa$. Then for two distinct subsets $B$ and $C$ of $\kappa$, $\vert\,B'\cap C'\,\vert\,<\kappa$ and therefore they are almost disjoint. We want to show that we can code $A'$ by a subset $X$ of $\kappa$ by showing that $X\cap A'_{\beta}$ is bounded if and only if $\beta\in A'$. This can be done by a forcing $Q$ with the conditions $(g,S)$ where $S\subseteq A'$, $\vert S\vert<\kappa$ and $g$ is an element of $^{<\kappa}2$. Extension is defined by: $(g, S)\geq (h,T)$ iff $h$ extends $g$, $S\subseteq T$ and if $\beta\in S$ and $h(\gamma)=1$ for a $\gamma\in A'_{\beta}$ then $g(\gamma)=1$. Note that two conditions with the same first component $\langle g, S\rangle$ and $\langle g, T\rangle$ are compatible because we can always find a common extension $\langle g, S\cup T\rangle$. Thus a function which maps every element of a definable antichain into its first component is injective (as otherwise the conditions would be compatible). So we have injectively mapped a definable class to a set as there are only $\kappa$ many first components. By Bounding such a function exists as a set and so $Q$ is set-c.c., i.e. every definable antichain is only set-sized. Then $Q$ is pretame, as every definable dense class can be seen as an antichain. 
Now let $G$ be a $Q$-generic, $G_0=\bigcup\{ g\,\vert\, (g,S)\in G\}$ and $X=\{\gamma\,\vert\, G_0(\gamma)=1\}$. we argue that we can find the almost disjoint sets in $L_{\kappa^*}[X]$ because $A'$ is reshaped and therefore it holds for any $\beta$ that $\vert\beta\vert\,\leq\kappa$ in $L_{\kappa^*}[A'\cap\beta]$. So after $X$ has decoded $A'\cap\beta$ it can find $A'_{\beta}$ and then continue the decoding in the following way: $\beta\in A'$ if there is an $(g, S)\in G$ with $\beta\in S$ and by the definition of extension if $G_0(\gamma)=1$ for a $\gamma\in A'_{\beta}$ then $g(\gamma)=1$. So $X\cap A'_{\beta}=\{\gamma\,\vert\, g(\gamma)=1\}\cap A'_{\beta}$ and that is bounded and therefore we have a code of $A'$ by $X$ via
\begin{equation*}
X\cap A'_{\beta}\text{ is bounded if and only if }\beta\in A'.
\end{equation*}

As this forcing is $\kappa$-closed (i.e. closed for $<\kappa$ sequences), $\kappa$ stays regular and therefore strongly inaccessible and by Proposition \ref{forcingsetmk} SetMK$^{*}$ is preserved and by Proposition \ref{forcingsetmk} SetMK$^*$ is preserved.
\end{proof}

We have seen how definable hyperclass-forcing can be carried out over a model $\mathcal{M}$ of MK$^{**}$: First we go to the related SetMK$^{**}$ model $M^+$ (Theorem \ref{maintheorem}). Then in order to be able to force over this model, we change $M^+$ to a model $L_{\kappa^*}[A]$ for a generic predicate $A$ (Theorem \ref{prepforcing}). Finally we showed how to code $A$ into a subset $X\subseteq\kappa$ to avoid having an undefinable predicate once we go back to the extension of the original MK$^{**}$ model (Theorem \ref{Jensencoding}). At this point we can force with any desirable pretame definable class-forcing over $L_{\kappa^*}[X]$, go back to MK$^{**}$ and get the desired definable hyperclass-forcing over MK$^{**}$. 

So we have given a template which allows us to do definable hyperclass-forcing over MK$^{**}$. In the following we will show how to use this template to produce minimal $\beta$-models of MK$^{**}$.

\section{Minimal $\beta$-Models of MK$^{**}$}

As an application of definable hyperclass forcing we will show that every $\beta$-model of MK$^{**}$ can be extended to a minimal $\beta$-model of MK$^{**}$ via the use of SetMK$^{**}$ models. Here a minimal model $M(S)$ of SetMK$^{**}$ is the least transitive model of SetMK$^{**}$ containing a real $S$ and equivalently a minimal $\beta$-model $\mathcal{M}(S)$ of MK$^{**}$ is the least $\beta$-model of MK$^{**}$ containing a real $S$.\footnote{We can see here that it is vital to restrict ourselves to $\beta$-models in order to talk about minimal models of MK by comparing this to the situation in ZFC: There it also only makes sense to talk about minimal models containing a real for well-founded models (and not for ill-founded models). So by making the transformation from MK to SetMK we have to restrict ourselves to $\beta$-models.} For that we will use and modify the template developed in the last section: We start with an arbitrary $\beta$-model $\mathcal{M}=(M, \mathcal{C})$ of MK$^{**}$ and from that we get the corresponding model $M^+$ of SetMK$^{**}$ (by Theorem \ref{maintheorem}) with $M=V_{\kappa}^{M^+}$ and $\mathcal{C}=P(M)\cap M^+$ where $\kappa$ is strongly inaccessible in $M^+$. Let $\kappa^*$ denote the height of $M^+$ and apply Theorem \ref{prepforcing} to arrive at $M^+=L_{\kappa^*}[A]$ where $A\subseteq \kappa^*$  and $(M^+, A)$ satisfies SetMK$^(**)$ relative to $A$. We now show that we can extend $M^+$ to a minimal model of SetMK$^{**}$ and then go back to an MK$^{**}$ model, which will be a minimal $\beta$-model of MK$^{**}$.

\begin{theo}\label{minimalmodels}
Every $\beta$-model of MK$^{**}$ can be extended to a minimal $\beta$-model of MK$^{**}$ with the same ordinals.
\end{theo}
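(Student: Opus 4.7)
The plan is to apply the template developed in the previous sections and then combine it with the coding-by-a-real techniques of \cite{beller1982coding} and \cite{Friedman:2000}. Given a $\beta$-model $\mathcal{M}=(M,\mathcal{C})$ of MK$^{**}$, I would first pass to the associated SetMK$^{**}$ model $M^+$ via Theorem \ref{maintheorem}. Then I would apply the preparatory forcing of Theorem \ref{prepforcing} to obtain $M^+ = L_{\kappa^*}[A]$ for a generic predicate $A\subseteq\kappa^*$, and Theorem \ref{Jensencoding} to code $A$ into some $X\subseteq\kappa$ so that $M^+ \subseteq L_{\kappa^*}[X]$, with SetMK$^{**}$ and the strong inaccessibility of $\kappa$ preserved throughout.

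Next, over $L_{\kappa^*}[X]$ I would perform an iterated almost-disjoint/Jensen-style coding, successively reducing $X\subseteq\kappa$ first to a subset of some cardinal below $\kappa$, then to $\omega_1$, and ultimately down to a real $S\subseteq\omega$, in the style of \cite{beller1982coding} and \cite{Friedman:2000}. Each stage must be set up as a pretame, definable class forcing whose relation is definable, so that Proposition \ref{forcingsetmk} guarantees the preservation of SetMK$^{**}$; reshaping between stages is handled as in Step 1 of Theorem \ref{Jensencoding}, and distributivity-type arguments rely on DC$_\kappa$ via Lemma \ref{distributive}. The outcome is a generic extension of the form $L_{\kappa^*}[S]$ in which $\kappa$ remains strongly inaccessible, $\kappa^*$ is still the height, every set is definable from $S$ and ordinals, and $S$ codes the original $A$ (hence all of $\mathcal{M}$).

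For minimality, if $N$ is any transitive model of SetMK$^{**}$ with $S\in N$, then by absoluteness $L_\alpha[S]\in N$ for every $\alpha\in\mathrm{Ord}(N)$; the coding is arranged, as in Beller--Jensen, so that $\kappa^*$ is the least ordinal with $L_{\kappa^*}[S]\models$ SetMK$^{**}$, so $N$ must have height at least $\kappa^*$ and contain $L_{\kappa^*}[S]$. Thus $L_{\kappa^*}[S]$ is the minimal transitive model of SetMK$^{**}$ containing $S$, and applying Theorem \ref{conversemain} to it yields a $\beta$-model $\mathcal{M}(S)$ of MK$^{**}$ which is minimal containing $S$, has the same ordinals as $\mathcal{M}$, and (since $S$ codes $A$ and hence all of $\mathcal{C}$) extends $\mathcal{M}$.

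The main obstacle is adapting the coding of \cite{beller1982coding} and \cite{Friedman:2000}, originally designed for ZFC, to the SetMK$^{**}$ setting, where Power Set is absent and one only has the largest cardinal $\kappa$ together with DC$_\kappa$ and Class-Bounding. Verifying at each stage that the forcing is pretame and has a definable forcing relation, that reshaping can be carried out using only the tools available in SetMK$^{**}$, and above all that the final real $S$ really does force $\kappa^*$ to be the least height of a model of SetMK$^{**}$ containing $S$ (so that minimality holds and not merely containment of $L_{\kappa^*}[S]$), is the delicate heart of the argument and is where the bulk of the work will lie.
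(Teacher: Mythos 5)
Your outline tracks the paper's template correctly up to the production of $L_{\kappa^*}[S]$ for a real $S$ (pass to $M^+$, apply Theorems \ref{prepforcing} and \ref{Jensencoding}, Jensen-code down to a real), and you even name the right target property: no $\alpha<\kappa^*$ with $L_{\alpha}[S]\models$ SetMK$^{**}$. But you supply no mechanism for achieving it, and the mechanism is not "arrange the coding as in Beller--Jensen". The standard level-by-level destruction of Replacement (which the paper does use, by modifying the reshaping forcing so that each $\gamma$ collapses already in $L_{\gamma+1}[A\cap\gamma,p\restriction\gamma]$ and choosing the almost disjoint codes level-by-level) only kills ZFC$^-$ models of height strictly between $\kappa$ and $\kappa^*$ containing the full coding predicate. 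It does nothing to prevent some $\bar{\kappa}<\kappa$ from being strongly inaccessible in a ZFC$^-$ model containing only the initial segment $X\cap\bar{\kappa}$ (equivalently, after coding to a real, in some $L_{\bar{\beta}}[S]$ with $\bar{\beta}<\kappa^*$); any such $\bar{\kappa}$ would be a ``source'' for a SetMK$^{**}$ model and hence for a smaller $\beta$-model of MK$^{**}$ containing $S$ with fewer ordinals. This is the actual content of the theorem and your proposal leaves it entirely to a closing caveat.

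The paper closes this gap with two steps you are missing. First, still below $\kappa$, it shoots a club through the fat-stationary set of limit cardinals $\bar{\kappa}<\kappa$ that are not inaccessible in any ZFC$^-$ model of the form $L_{\bar{\beta}}[X\cap\bar{\kappa}]$ (fat-stationarity is proved via towers of $\Sigma_1$-Skolem hulls), and then forces every limit cardinal into that club with an Easton product of collapses $\mathrm{Col}_i(\bar{\kappa}_i^+,\bar{\kappa}_{i+1})$; this eliminates all cardinal sources below $\kappa$. Second, after Jensen coding to a real $R$, it invokes the admissibility-spectrum theorem (Theorem \ref{spectrum}) with the $\Sigma_1$ property ``either $L_{\alpha}[R]$ has a largest cardinal or $\alpha$ is seen to be singular before any ZFC$^-$ level above $\alpha$ appears'' to obtain a real $S$ with $\Lambda(S)\subseteq\{\alpha\mid L[R]\models\varphi(\alpha)\}$, which upgrades the non-source property from cardinals to all ordinals $\bar{\alpha}<\kappa^*$. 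Only then does the final argument go through: a proper $\beta$-submodel of $(L_{\kappa}[S],\mathcal{C})$ containing $S$ would yield $L_{\bar{\alpha}}[S]\models$ SetMK$^{**}$ for some $\bar{\alpha}<\kappa^*$, contradicting what was forced. Without the club-shooting/Easton step and the spectrum theorem, your construction establishes at best that $\kappa^*$ is minimal among heights of models of the form $L_{\alpha}[S]$ with inaccessible $\kappa$, not minimality of the resulting $\beta$-model of MK$^{**}$.
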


\begin{proof}
First we use the template described above to arrive at the model $L_{\kappa^*}[A]$ and then we will code the predicate $A$ into a subset of $\kappa$ by using Theorem \ref{Jensencoding} with a small modification in the ``reshaping'' forcing. Instead of forcing that each $\gamma<\kappa^*$ collapses in $L_{\kappa^*}[A\cap\gamma, p\restriction\gamma]$, we will force it to already collapse instantly in the next level, i.e. in $L_{\gamma+1}[A\cap\gamma, p\restriction\gamma]$. So the forcing will be:
\begin{equation*}
P=\{p:\beta\to 2\,\vert\, \kappa\leq\beta<\kappa^*, \forall\,\gamma\leq\beta\, (L_{\gamma+1}[A\cap\gamma, p\restriction\gamma]\vDash\text \,\vert\,\gamma\,\vert \leq\kappa)\}
\end{equation*}
The proof that $P$ is definably-distributive then works in exactly the same way. As in Theorem \ref{Jensencoding} we can code $A$ to be the even part of the predicate $A'$ added by the reshaping forcing which in turn can be coded into an $X\subseteq\kappa$ by an almost disjoint forcing. This gives us that there are no SetMK$^{**}$ models containing $X$ of height between $\kappa$ and $\kappa^*$: In the reshaping forcing we destroyed the Replacement axiom level by level relative to $A$ and in the almost disjoint coding we can now choose the codes instantly level-by-level (i.e. every code for $\gamma$ appears in $L_{\gamma+1}[X]$). So $A'$ can be recovered level-by-level from $X$ and therefore Replacement is also destroyed level-by-level relative to $X$. We arrive at a SetMK$^{**}$ model $L_{\kappa^*}[X]$, with $X\subseteq\kappa$, which is the least transitive $ZFC^-$ model containing $X$ (again $\kappa$ remains regular and indeed strongly inaccessible, because the almost disjoint coding is $\kappa$-closed).

We will extend this to a minimal model of SetMK$^{**}$ in two steps: First we extend $L_{\kappa^*}[X]$ to a model $L_{\kappa^*}[Y]$ such that no cardinal $\bar{\kappa}<\kappa^*$ can serve as a ``source'' for a SetMK$^{**}$ model (i.e. is the largest cardinal of a Set$MK^{**}$ model containing $Y\cap\bar{\kappa}$) and second we show that we can add a real $S$ such that in $L_{\kappa^*}[S]$ there are no SetMK$^{**}$ models containing $S$ below $\kappa^*$. Then it only remains to show that from $L_{\kappa^*}[S]$ we can go back to a minimal $\beta$-model of MK$^{**}$.\\

Step 1: With the modification of Theorem \ref{Jensencoding}, we have shown that there are no SetMK$^{**}$ models containing $X$ between $\kappa$ and $\kappa^*$. But it could still be that there exist  cardinals below $\kappa$ which are sources for SetMK$^{**}$ models. We will destroy these cardinals by shooting a club through a ``fat-stationary'' set which has no such cardinals and then force all limit cardinals to belong to this club. 

So let $S=\{\bar{\kappa}<\kappa\,\vert\,\bar{\kappa}$ is a limit cardinal and for all $\bar{\beta}>\bar{\kappa}$, if  $L_{\bar{\beta}}[X\cap\bar{\kappa}]\vDash ZFC^-$ then $L_{\bar{\beta}}[X\cap\bar{\kappa}]\nvDash\bar{\kappa}$ is strongly inaccessible$\}$. 
\begin{defi}
$S$ is \emph{fat-stationary} if for every club $C$ in $L_{\kappa^*}[X]$, $S\cap C$ contains closed subsets of any order type less than $\kappa$. 
\end{defi}
We prove the following:
\begin{lem}
$S$ is fat-stationary and there is a $\kappa$-distributive (i.e. $<\kappa$ distributive) forcing of size $\kappa$ that adds a club $C\subseteq S$.
\end{lem}
\begin{proof}
First we will show that $S$ is stationary with respect to clubs in $L_{\kappa^*}[X]$. So suppose $C$ is a club in $L_{\alpha}[X]$ for an $\alpha<\kappa^*$. We build an increasing sequence $\langle M_n\,\vert\, n<\omega\rangle$ of sufficiently elementary submodels of $L_{\alpha}[X]$ in the following way: Let $M_0$ be the $\Sigma_1$-Skolem Hull of $\omega\cup\{X, C\}$ in $L_{\alpha}[X]$. Then $C\in M_0$ and $\kappa_0= sup(M_0\cap\kappa)$ is a cardinal. Next, let $M_1$ be the $\Sigma_1$-Skolem Hull of $\kappa_{0} +1\cup\{X, C\}$ in $L_{\alpha}[X]$ and $\kappa_1= sup(M_1\cap\kappa)$. Repeat this construction for all $n<\omega$. Then this sequence of elementary submodels is definable over $M_{\omega}=\bigcup_{n<\omega} M_n$ and $\kappa_{\omega}=sup_{n<\omega}\kappa_n <\kappa$ is a cardinal in $C$ as $C$ is closed, unbounded in $\kappa$. Also $\kappa_{\omega}$ is an element of $S$ because if $L_{\bar{\alpha}}[X\cap\kappa_{\omega}]$ is the transitive collapse of $M_{\omega}$ then there are no ZFC$^-$ models containing $X\cap\kappa_{\omega}$ of height $<\bar{\alpha}$ (by elementarity), of height $=\bar{\alpha}$ because $\langle \kappa_n\,\vert\, n<\omega\rangle$ is definable over it (and so $\kappa_{\omega}$ becomes definably singular) and any ZFC$^-$ model containing $X\cap\kappa_{\omega}$ of height $>\bar{\alpha}$ sees that $\kappa_{\omega}$ has cofinality $\omega$ (as the $\kappa_n$-sequence is an element of it).

To show that $S$ is fat-stationary we can use the same proof as for stationarity except one uses a longer $\delta$-sequence of elementary submodels, for $\delta$ a limit cardinal less than $\kappa$. 

Now for the second part of the Lemma we can force with a set-forcing to add a club. Here we will closely follow the proof of the ZFC version of this claim, as proven in \cite{Abraham1983-ABRFCU} (see there for more details). Let $Q=\{ p\,\vert\, p\text{ is a closed, bounded subset of } S\}$ be a forcing notion ordered by end-extensions: $q\leq p$ iff $p=q\cap(\text{sup}(p)+1)$. For $G$ $Q$-generic over $L_{\kappa^*}[X]$ let $C=\bigcup G$. Then $C$ is closed and unbounded and a subset of $S$. To show  that $Q$ is $\kappa$-distributive we have to show that for every $\tau<\kappa$ and sequence $\mathcal{D}=\langle D_i \,\vert\, i\in\tau\rangle$ of open, dense subsets of $Q$, $\bigcap_{i<\tau} D_i$ is dense in $Q$. Now we can define a sequence of elementary substructures $\langle M_{\alpha}\,\vert\,\alpha<\kappa\rangle$ of $L_{\kappa^*}[X]$ such that $c_{\alpha}=M_{\alpha}\cap\kappa$ is an ordinal and $\langle c_{\alpha}\,\vert\,\alpha<\kappa\rangle$ is an increasing and continuous sequence cofinal in $\kappa$. Let $E$ be the collection of the $c_{\alpha}$, $\alpha<\kappa$. Because $S$ is fat-stationary, $S\cap E$ contains a closed subset $A$ of order-type $\tau +1$. Then in the model $M_{\alpha}$, with $\alpha=\text{sup}(A)$, we can define an increasing sequence $\langle p_i\,\vert\, i<\tau\rangle$, such that $p_i\in Q$ and $p_{i+1}\in D_i\cap M_{\alpha}$. We can define $p_{\tau}=\bigcup_{i<\tau} p_i\cup\{\alpha\}$ and this will be in $\bigcap_{i<\tau} D_i$. Note that this (set-) forcing is an element of $L_{\kappa^*}[X]$ and therefore preserves ZFC$^-$. Furthermore, as this forcing doesn't add sets of size $<\kappa$, $\kappa$ stays strongly inaccessible and SetMK$^*$ is preserved because of Proposition \ref{forcingsetmk}.
\end{proof}

Let $X'$ be the join of $X$ with the club we added. Then $X'\subseteq\kappa$ and the resulting model is $L_{\kappa^*}[X']$.

\begin{lem}
We can force all limit cardinals to belong to $C$ with a forcing of size $\kappa$ such that $\kappa$ remains strongly inaccessible.
\end{lem}
\begin{proof}
Enumerate $C$ as follows: $C=\langle \bar{\kappa}_i\,\vert\,i<\kappa\rangle$. We may assume that each $\bar{\kappa}_i$ is a strong limit cardinal (as $\kappa$ is strongly inaccessible we can thin out $C$). Then we can build an Easton product of collapses, where we collapse every $\bar{\kappa}_{i+1}$ to the successor of $\bar{\kappa}_i$ and therefore ensure that all limit cardinals below $\kappa$ are limits of cardinals in $C$ and therefore are themselves in $C$.

So for $i<\kappa$ consider $Col_i(\bar{\kappa}_i^+, \bar{\kappa}_{i+1})$, where the conditions are functions $p$ with $dom(p)\subset\bar{\kappa}_i^+$, $\vert dom(p)\vert<\bar{\kappa}_i^+$ and $range(p)\subset\bar{\kappa}_{\i+1}$. Cardinals below $\bar{\kappa}_i^+$ and above $\bar{\kappa}_{i+1}^{\bar{\kappa}_i}$ are preserved (the size of the forcing is $\bar{\kappa}_{i+1}^{\bar{\kappa}_i}$) and in the extension we have a function which maps $\bar{\kappa}_i^+$ onto $\bar{\kappa}_{i+1}$.

Now we can build the Easton product (product with Easton support) of these collapses for every $i<\kappa$: A condition $p$ in this forcing is a function such that $p=\langle p_i\,\vert\, i<\kappa\rangle\in \Pi_{\i<\kappa}\text{Col}_i(\bar{\kappa}_i^+, \bar{\kappa}_{i+1})$ and the forcing is ordered by end-extension. $p$ has Easton support, i.e. for every inaccessible cardinal $\lambda$, $\vert\,\{\alpha<\lambda\,\vert\, p(\alpha)\neq\emptyset\}\,\vert<\lambda$. 
As usual with Easton Products the forcing notion $P$ can be split into two parts $P(\leq\lambda)=\Pi_{\i\leq\lambda}\text{Col}_i(\bar{\kappa}_i^+, \bar{\kappa}_{i+1})$ and $P(>\lambda)=\Pi_{\lambda<\i<\kappa}\text{Col}_i(\bar{\kappa}_i^+, \bar{\kappa}_{i+1})$ for every regular cardinal $\lambda$. For this reason and as each $\bar{\kappa}_i$ is a strong limit, each collapse from $\bar{\kappa}_{i+1}$ to $\bar{\kappa}_i^+$ will not be affected by the other collapses and $\kappa$ remains regular and strong limit. Furthermore, as this forcing is in $L_{\kappa^*}[X']$ (it is of size $\kappa$) it preserves SetMK$^{**}$.

Because of the unboundedness of $C$, every limit cardinal is also a limit of cardinals in $C$ and therefore, as $C$ is closed, it is an element of $C$.
\end{proof}
We conclude Step 1 by choosing $X''$ to be the join of $X'$ and the above Easton product. Then we arrive at a model $L_{\kappa^*}[X'']$ with $X''\subseteq\kappa$ such that for every cardinal $\bar{\kappa}<\kappa^*$ there is no model of ZFC$^-$ containing $X''\cap\bar{\kappa}$ in which $\bar{\kappa}$ is inaccessible and therefore $\bar{\kappa}$ is not a source for a SetMK$^{**}$ model.\\

Step 2: We want to extend the results from the last step to hold for all ordinals, i.e. for all ordinals $\alpha<\kappa^*$ there is no SetMK$^{**}$ model of height $<\kappa^*$ containing a real $S$ in which $\alpha$ is strongly inaccessible. This makes use of Jensen coding and a result about admissibility spectra which is connected to it. We will use these results as black boxes and will only state the main definitions and theorems here:
\begin{theo}[Jensen Coding]
Suppose that $\langle M, A\rangle$ is a transitive model of ZFC, i.e. $M$ is a transitive model of ZFC, $A\subseteq M$ and Replacement holds in $M$ for formulas mentioning $A$ as a unary predicate. Then there is an $\langle M, A\rangle$-definable class forcing $P$ such that if $G\subseteq P$ is $P$-generic over $\langle M, A\rangle$, then:
\begin{itemize}
\item[a)] $\langle M[G], A, G\rangle\models ZFC$.
\item[b)] For some $R\subseteq \omega$, $M[G]\models V=L[R]$ and $\langle M[G], A, G\rangle\models A, G$ are definable from the parameter $R$.
\end{itemize}
\end{theo}
The very elaborate proof of this result uses Jensen's fine structure theory and, very roughly, the forcing involved consists of three components: an almost disjoint coding at successor cardinals, a variation thereof at limit cardinals and a reshaping forcing.\footnote{An detailed account of this can be found in \cite{beller1982coding}, a simplified version of the proof can be found in \cite{Friedman:2000}.}
\begin{defi}
Let $T$ be the theory of ZF without Power Set and with Replacement restricted to $\Sigma_1$ formulas. Then $\Lambda(R)$ for a real $R$ denotes the admissibility spectrum of $R$ and is defined as the class of all ordinals $\alpha$ such that $L_{\alpha}[R]\models T$, i.e. the class of all $R$-admissible ordinals.
\end{defi}
\begin{theo}[S.-D. Friedman]\label{spectrum}\footnote{See \cite {Friedman:2000}, Theorem 7.5, p. 142.}
Suppose $\varphi$ is $\Sigma_1$ and $L\models\varphi(\kappa)$ whenever $\kappa$ is an $L$-cardinal. Then there exists a real $R<_L 0^{\sharp}$ such that $\Lambda(R)\subseteq\{\alpha\,\vert\, L\models\varphi(\alpha)\}$ and $R$ is cardinal preserving over $L$.
\end{theo}
We will use these theorems to prove the following lemma:
\begin{lem}
We can extend the model $L_{\kappa^*}[X'']$ to be of the form $L_{\kappa^*}[S]$ for a real $S$ such that $L_{\kappa^*}[S]\models SetMK^{**}$ and whenever $\bar{\alpha}<\kappa^*$ is an ordinal  there is no model of SetMK$^{**}$ of height $<\kappa^*$ containing $S$ in which $\bar{\alpha}$ is strongly inaccessible.
\end{lem}
\begin{proof}
First we add a real $R$ to the resulting model of Step 1 and get a model $L_{\kappa^*}[R]\models SetMK^{**}$. 
This can be done by using Jensen coding over the model $L_{\kappa^*}[X'']$. Although we start from a model of ZFC$^-$ rather than ZFC our model is of the form $L_{\kappa^*}[X'']$ and therefore we can use the standard pretameness argument for Jensen coding to show that ZFC$^-$ is preserved\footnote{See \cite{Friedman:2000}, Chapter 4.}. Also, $\kappa$ will still be inaccessible in the extension because Jensen coding preserves inaccessibles.\footnote{This follows from an property called diagonal distributivity (see \cite{Friedman:2000}, p. 37).}  
 Note that the result from Step 1 still holds: In $L_{\kappa^*}[R]$ we have that if $\bar{\kappa}<\kappa^*$ is a cardinal then there is no transitive model of SetMK$^{**}$ containing $R$ in which $\bar{\kappa}$ is inaccessible as otherwise there would have been such a model containing $X''\cap\bar{\kappa}$ as the latter is coded by $R$ in $L_{\bar{\kappa}}[R]$.

Now we use Theorem \ref{spectrum} relativized to the real $R$ to produce a new real $S$ such that this holds for ordinals $\bar{\kappa}$. Theorem \ref{spectrum} works in the context of ZFC$^-$ for the same reasons as for Jensen coding. Note that $L_{\kappa^*}[R]\models\varphi(\bar{\kappa})$ for every $L_{\kappa^*}[R]$-cardinal $\bar{\kappa}$ where $\varphi(\alpha)$ is the following $\Sigma_1$ property with parameter $R$:  
``Either $L_{\alpha}[R]\models$ there is a largest cardinal or there is $\beta>\alpha$ such that $L_{\beta}[R]\models$ $\alpha$ is singular and for all $\gamma$ with $\alpha<\gamma<\beta$, $L_{\gamma}[R]\nvDash ZFC^-$''. This property says that either $\alpha$ is a successor or we can ``see'' the singularity of $\alpha$ before we see a ZFC$^-$ model for which it could be a source. Then by Theorem \ref{spectrum} there exists a real $S$ generic over $L_{\kappa^*}[R]$ such that $L_{\kappa^*}[S]\models SetMK^{**}$ and $\Lambda(S)\subseteq\{\alpha\,\vert\, L[R]\models\varphi(\alpha)\}$. As $\alpha$ which is inaccessible in a model of ZFC$^-$ containing $S$ is $S$-admissible, we get the desired property for all ordinals.
\end{proof}

We now have a minimal model $L_{\kappa^*}[S]$ of SetMK$^{**}$, i.e. the least transitive model of SetMK$^{**}$ containing $S$. It only remains to show that by going back to MK$^{**}$ we arrive at a minimal $\beta$-model of $MK^{**}$. To see that consider the model $(L_{\kappa}[S], \mathcal{C})$ where $\mathcal{C}$ consists of the subsets of $L_{\kappa}[S]$ in $L_{\kappa^*}[S]$. This is a $\beta$-model of MK$^{**}$ by Proposition \ref{conversemain} and it is the least such model containing $S$ because otherwise there exists a $\beta$-model $(N, \mathcal{C}')\subset (L_{\kappa}[S], \mathcal{C})$, $(N, \mathcal{C}')\models MK^{**}$ containing $S$ that would give rise to a model $N^+$ of SetMK$^{**}$. If we then go to the $L[S]$ of $N^+$ we arrive at a model $L_{\alpha}[S]$ for some $\alpha<\kappa^*$ which is a model of SetMK$^{**}$. This is a contradiction to the minimality of $L_{\kappa^*}[S]$.
\end{proof}

\section{Further Work and Open Questions}
These results opens up a wider area of further research and related open questions.

In the definition of definable hyperclass forcing we used the restriction to $\beta$-models of MK$^*$ to make the coding of a transitive SetMK$^*$ model work. It would be interesting to investigate what happens if we drop this restriction:\\

\emph{Question}: How can definable hyperclass forcing be defined for an arbitrary model of MK$^{**}$?\\

Dropping the $\beta$-model assumption for the coding would mean to work only internally in the MK$^{**}$ model and restricting ourselves to just coding pairs. We are confident that this can be done, but there are many details to be worked out.

In this paper we consider three variants of the axioms of Morse-Kelley; the standard form MK, the extension via Class-Bounding, here called MK$^*$ and the additional extension with Dependent Choice, called MK$^{**}$. The obvious question presents itself, which is how they are related:\\

\emph{Question}: Assuming just the consistency of MK, are there models of MK that don't satisfy MK$^*$ and models of MK$^*$ that don't satisfy MK$^{**}$?\\

Another fruitful topic is the analogy between Morse-Kelley and second-order arithmetic. \\

\emph{Question}: What results and questions can be transferred from the context of Morse-Kelley class theory to second-order arithmetic and vice versa?\\

As an example for this transfer, let us consider the question of minimal $\beta$-models of MK$^{**}$. It can be translated to minimal $\beta$-models of second-order arithmetic (plus Dependent Choice) in the following way:
\begin{theo}
Every $\beta$-model of second-order arithmetic with Dependent Choice can be extended to a minimal $\beta$-model of second-order arithmetic with Dependent Choice with the same ordinals.
\end{theo}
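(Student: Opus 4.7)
The plan is to mirror the proof of Theorem~\ref{minimalmodels}, with the simplification that the role of the inaccessible cardinal $\kappa$ is now played by $\omega$. First I would set up an analog of the coding of Section~\ref{codingsection}: a $\beta$-model $\mathcal{N}$ of second-order arithmetic with Dependent Choice is translated into a transitive set-theoretic model $N^+$ in which every set is hereditarily countable, $\omega$ is the largest cardinal, the reals of $\mathcal{N}$ are exactly $P(\omega)\cap N^+$, and which satisfies the natural set-theoretic counterpart $\mathrm{SetZ}_2$ (roughly $\mathrm{ZFC}^-$ plus ``every set injects into $\omega$'' plus $\omega$-Dependent Choice). The coding by well-founded trees works essentially as in Definition~\ref{codingdefi}, with $\beta$-ness guaranteeing genuine well-foundedness, and one verifies the arithmetic analogs of Theorem~\ref{maintheorem} and Theorem~\ref{conversemain}.

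With this correspondence in hand I would apply the preparatory forcing of Theorem~\ref{prepforcing} to convert $N^+$ into the form $L_{\alpha^*}[A]$ for a generic $A\subseteq\alpha^*$, where $\alpha^*$ denotes the height of $N^+$. Then I would carry out the modified reshaping together with almost-disjoint coding as in Theorem~\ref{Jensencoding}, using the level-by-level modification from the proof of Theorem~\ref{minimalmodels} (forcing each $\gamma<\alpha^*$ to collapse already in $L_{\gamma+1}[A\cap\gamma,\,p\restriction\gamma]$), in order to code $A$ by a real $X\subseteq\omega$ with the property that no model of $\mathrm{SetZ}_2$ of height strictly between $\omega$ and $\alpha^*$ contains $X$. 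Because $\kappa=\omega$, most of the cardinal-preservation overhead of the MK$^{**}$ argument drops away: there is no larger cardinal to protect, and Step~1 of Theorem~\ref{minimalmodels} (the club-shooting through a fat-stationary set followed by an Easton product of collapses) becomes unnecessary, since the level-by-level reshaping already prevents any ordinal below $\alpha^*$ from being a source of a $\mathrm{SetZ}_2$-model containing $X$.

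The final step is the analog of Step~2. I would apply Jensen coding over $L_{\alpha^*}[X]$ to add a real $R$ with $L_{\alpha^*}[R]\models \mathrm{SetZ}_2$, and then invoke Theorem~\ref{spectrum} relativized to $R$ with an appropriately adapted $\Sigma_1$ formula $\varphi$ (along the lines of the one used in the proof of Theorem~\ref{minimalmodels}, asserting that any ``degeneracy'' of $\alpha$ is witnessed in some $L_\beta[R]$ with $\alpha<\beta$ before any $\mathrm{ZFC}^-$-model of height strictly between $\alpha$ and $\beta$ appears), thereby producing a real $S$ such that no $\alpha<\alpha^*$ in the admissibility spectrum $\Lambda(S)$ can be the height of a $\mathrm{SetZ}_2$-model containing $S$. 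Transporting $S$ back through the correspondence then yields the desired minimal $\beta$-model of second-order arithmetic with Dependent Choice, with the same ordinals as $\mathcal{N}$. The main obstacle is the bookkeeping needed to verify that each of the forcings involved remains pretame and preserves the relevant axioms when the largest ``cardinal'' is merely $\omega$; this follows the template of Theorems~\ref{prepforcing} and~\ref{Jensencoding} and is in fact easier than in the MK$^{**}$ case, since there is no inaccessible to be preserved and, correspondingly, the preparatory and reshaping forcings can be analyzed using ordinary $<\omega$-distributivity in place of the $\leq\kappa$-distributivity arguments of Lemma~\ref{distributive} and Claim~\ref{distributiveP}.
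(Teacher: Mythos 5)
Your proposal follows the paper's route almost exactly — translate the $\beta$-model of second-order arithmetic to a transitive ZFC$^-$ model in which the former inaccessible is just $\aleph_0$, apply the analogue of Theorem \ref{prepforcing} to reach $L_{\alpha^*}[A]$, then perform the level-by-level reshaping and almost disjoint coding from the beginning of the proof of Theorem \ref{minimalmodels} to land on $L_{\alpha^*}[X]$ with $X\subseteq\omega$ — and, like the paper, you correctly drop Step 1 (club-shooting and Easton collapses). The one substantive divergence is your final step: the paper stops at $L_{\alpha^*}[X]$ and declares it the least transitive ZFC$^-$ model containing $X$, whereas you append an analogue of Step 2 (Jensen coding to a real $R$ followed by Theorem \ref{spectrum}). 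That step is superfluous here, and recognizing why is really the point of the exercise: the almost disjoint coding already produces a \emph{real} $X\subseteq\omega$, so there is nothing left to code down, and the only possible ``source'' for a set-theoretic counterpart of a $\beta$-model of arithmetic is $\omega$ itself, which is absolute between transitive models — unlike the MK$^{**}$ case, where an ordinal that is not a cardinal of $L_{\kappa^*}[S]$ could still be strongly inaccessible in some smaller transitive model, which is exactly what Step 2 and the admissibility-spectrum argument are there to rule out. Since the level-by-level destruction of Replacement relative to $X$ already excludes every height strictly between $\omega$ and $\alpha^*$, minimality is achieved at $L_{\alpha^*}[X]$; invoking Theorem \ref{spectrum} (whose hypotheses concern $L$-cardinals and $0^\sharp$ and would need separate justification in this setting) only adds unneeded machinery. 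Your argument is not wrong, but the cleaner proof — and the one the paper gives — ends two forcings earlier.
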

\begin{proof}[Proof outline]
Starting with a $\beta$-model of second-order arithmetic we can go to a related model of ZFC$^-$ where the inaccessible cardinal $\kappa$ is now simply $\aleph_0$. Then the question about models below the largest cardinal becomes trivial and we can concentrate on the case of eliminating models of ZFC$^-$ between $\aleph_0$ and the height of the model $\alpha^*$. First we change the ZFC$^-$ model to a model $L_{\alpha^*}[A]$ in a way analogous to Theorem \ref{prepforcing}. Then we can adapt the proof of Theorem \ref{Jensencoding} in a similar way as we did in the beginning of the proof of Theorem \ref{minimalmodels}: we code the predicate $A$ into $X\subseteq\aleph_0$ with an almost disjoint forcing, where we first reshape $A$ to a predicate $A'$. In this reshaping forcing we destroy the Replacement axiom level-by-level relative to $A$ and therefore it is also destroyed level-by-level relative to $X$. We arrive at a model $L_{\alpha^*}[X]$, with $X\subseteq\aleph_0$, which is the least transitive ZFC$^-$ model containing $X$ and from this we can go back to a minimal $\beta$-model of second-order arithmetic.
\end{proof}

Of course, definable hyperclass forcing is not the last step in considering 
a hierarchy of forcing notions via their size. One could ask further:\\

\emph{Question}: What would a general hyperclass forcing look like and in which context can it be developed (a hypercass theory)? What would a hyperhyperclass forcings look like, i.e. a forcing where conditions are hyperclasses? \\

Here we developed a further step in this hierarchy after set forcing, definable class forcing and class-forcing in MK. We hope that it will serve as a basis for further fruitful research.

\bibliographystyle{amsalpha} 
\bibliography{biblio-diss_antos.bib}

\end{document}